\theoremstyle{plain}
\newtheorem{thm}{\protect\theoremname}
\theoremstyle{remark}
\newtheorem{rem}[thm]{\protect\remarkname}
\theoremstyle{plain}
\newtheorem{prop}[thm]{\protect\propositionname}
\theoremstyle{plain}
\newtheorem{lem}[thm]{\protect\lemmaname}
\theoremstyle{plain}
\newtheorem{fact}[thm]{\protect\factname}
\theoremstyle{plain}
\newtheorem{cor}[thm]{\protect\corollaryname}
\theoremstyle{plain}
\newtheorem{question}[thm]{\protect\questionname}
\providecommand{\corollaryname}{Corollary}
\providecommand{\factname}{Fact}
\providecommand{\lemmaname}{Lemma}
\providecommand{\propositionname}{Proposition}
\providecommand{\questionname}{Question}
\providecommand{\remarkname}{Remark}
\providecommand{\theoremname}{Theorem}
\begin{document}
\global\long\def\fii{\varphi}%
\global\long\def\T{\mathbb{T}}%
\global\long\def\F{\mathbb{F}}%
\global\long\def\M{\mathcal{M}}%
\global\long\def\zfc{\mathrm{ZFC}}%
\global\long\def\N{\mathcal{N}}%
\global\long\def\p{\mathbb{P}}%
\global\long\def\q{\mathbb{Q}}%
\global\long\def\NN{\mathbb{N}}%
\global\long\def\RR{\mathbb{R}}%
\global\long\def\power{\mathcal{P}}%
\global\long\def\LL{\mathcal{L}}%
\global\long\def\K{\mathcal{K}}%
\global\long\def\A{\mathcal{A}}%

\global\long\def\hod{\mathrm{HOD}}%
\global\long\def\Def{\operatorname{Def}}%
\global\long\def\zfc{\mathrm{ZFC}}%
\global\long\def\im{\operatorname{Im}}%
\global\long\def\dom{\operatorname{Dom}}%
\global\long\def\rng{\operatorname{Range}}%
\global\long\def\ord{{\bf Ord}}%
\global\long\def\lp{\operatorname{lp}}%
\global\long\def\meas{\operatorname{meas}}%
\global\long\def\card{{\bf Card}}%
\global\long\def\lex{\mathrm{Lex}}%
\global\long\def\id{\operatorname{Id}}%
\global\long\def\ult{\operatorname{Ult}}%
\global\long\def\It{\operatorname{It}}%

\global\long\def\th#1{\mathrm{Th}\left(#1\right)}%
\global\long\def\crit#1{\mathrm{crit}\left(#1\right)}%
\global\long\def\cof#1{\mathrm{cf}\left(#1\right)}%
\global\long\def\fin{\power_{\mathrm{Fin}}}%
\global\long\def\cfq{Q^{\mathrm{cf}}}%
\global\long\def\otp{\operatorname{otp}}%
\global\long\def\fin#1{\left[#1\right]^{<\omega}}%
\global\long\def\psu#1#2{\prescript{#1}{}{#2}}%

\global\long\def\con{\subseteq}%
\global\long\def\til{,\dots,}%
\global\long\def\emp{\varnothing}%
\global\long\def\smin{\mathord{\smallsetminus}}%
\global\long\def\aa{\mathtt{aa}}%
\global\long\def\mets{\mathord{\upharpoonright}}%

\global\long\def\sdiff{\triangle}%
\global\long\def\po{\trianglelefteq}%
\global\long\def\spo{\vartriangleleft}%
\global\long\def\pdwn{\mathord{\downarrow}}%
\global\long\def\pup{\mathord{\uparrow}}%
\global\long\def\nec{\square}%
\global\long\def\pos{\lozenge}%
\global\long\def\necc{\square_{ccc}}%
\global\long\def\posc{\lozenge_{ccc}}%

\title{Iterating the cofinality-$\omega$ constructible model}
\author{Ur Ya'ar}
\address{\parbox{\linewidth}{Einstein Institute of Mathematics \\ University of Jerusalem \\	Edmond J. Safra Campus, Givat Ram \\ Jerusalem 91904, ISRAEL\\}}
\email{ur.yaar@mail.huji.ac.il}
\thanks{I would like to thank my advisor, Prof. Menachem Magidor, for his
guidance and support without which this work would not have been possible.}
\begin{abstract}
We investigate iterating the construction of $C^{*}$, the $L$-like
inner model constructed using first order logic augmented with the
``cofinality $\omega$'' quantifier. We first show that $\left(C^{*}\right)^{C^{*}}=C^{*}\ne L$
is equiconsistent with $\zfc$, as well as having finite strictly
decreasing sequences of iterated $C^{*}$s. We then show that in models
of the form $L^{\mu}$ we get infinite decreasing sequences of length
$\omega$, and that an inner model with a measurable cardinal is required
for that.
\end{abstract}

\maketitle

\section{Introduction}

The model $C^{*}$, introduced by Kennedy, Magidor and V\"{a}\"{a}n\"{a}nen
 in \cite{IMEL}, is the model of sets constructible using the logic
$\LL(Q_{\omega}^{\mathrm{cf}})$ -- first order logic augmented with
the ``cofinality $\omega$'' quantifier. As in the case of $L$
-- the model of sets constructible using first order logic, this
is a model of $\zfc$, and one can phrase the formula ``$V=C^{*}$'',
i.e. $\forall x\exists\alpha(x\in L'_{\alpha})$ where $L'_{\alpha}$
is the $\alpha$-th level in the construction of $C^{*}$. Unlike
$L$, however, it is not always true that $C^{*}\vDash V=C^{*}$,
which is equivalent to the question whether $\left(C^{*}\right)^{C^{*}}=C^{*}$.
This is clearly the case if $V=L$, so the interesting question is
whether this can hold with $C^{*}\ne L$. In section \ref{sec:V=00003DC*}
we show that this is consistent relative to the consistency of $\zfc$.
Next we investigate the possibilities of $C^{*}\nvDash V=C^{*}$.
In such a case, it makes sense to define recursively the iterated
$C^{*}$s: 
\begin{align*}
C^{*0} & =V\\
C^{*\left(\alpha+1\right)} & =\left(C^{*}\right)^{C^{*\alpha}}\,\text{ for any \ensuremath{\alpha}}\\
C^{*\alpha} & =\bigcap_{\beta<\alpha}C^{*\beta}\,\,\text{ for limit \ensuremath{\alpha}}.
\end{align*}
 This type of construction was first investigated by McAloon \cite{mcaloon2}
regarding $\mathrm{HOD}$, where he showed that it is equiconsistent
with $\zfc$ that there is a strictly decreasing sequence of iterated
$\mathrm{HOD}$ of length $\omega$, and the intersection of the sequence
can be either a model of $\zfc$ or of $\mathrm{ZF+\neg AC}$. Harrington
also showed (in unpublished notes, cf. \cite{ZADROZNY}) that the
intersection might not even be a model of $\mathrm{ZF}$. Jech \cite{jech1975descending}
showed that it is possible to have a strictly decreasing sequence
of iterated $\mathrm{HOD}$ of any arbitrary ordinal length, and later
Zadro\.{z}ny \cite{zadrozny1981transfinite} improved this to get
an $\ord$ length sequence. In section \ref{sec:Iterating C*} we
show that unlike the case of $\mathrm{HOD}$, without large cardinals
we can only have finite decreasing sequences of iterated $C^{*}$,
and that assuming the existence of a measurable cardinal is equivalent
to the consistency of a strictly decreasing sequence of length $\omega$.

\section{\label{sec:V=00003DC*}Relative consistency of \textquotedblleft$V=C^{*}\protect\ne L$\textquotedblright}

In this section we follow the method of Zadro\.{z}ny \cite{ZADROZNY}
to obtain the following result:
\begin{thm}
\label{thm:V=00003DC*}If $\zfc$ is consistent then so is $\zfc+V=C^{*}\ne L\,+$
$2^{\aleph_{0}}=\aleph_{1}$.
\end{thm}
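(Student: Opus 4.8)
The plan is to start from a model $M\vDash\zfc+V=L$ (which exists if $\zfc$ is consistent, and in which GCH holds) and to force over $M$ with a carefully designed set forcing $\p\in M$ of length $\omega_{1}$, arranging that in $M[G]$ one has $M[G]=\left(C^{*}\right)^{M[G]}\ne L$ and $2^{\aleph_{0}}=\aleph_{1}$. The guiding observation is that $\left(C^{*}\right)^{M[G]}$ always contains $L$, and that by applying the quantifier $\cfq_{\omega}$ at the appropriate levels $L'_{\xi}$ of the $C^{*}$-construction to the canonical $L$-definable well-ordering of an ordinal $\delta$ it can decide whether $\cof{\delta}=\omega$ holds in $M[G]$. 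Hence, if the generic $G$ is coded into the ``$\cof{\cdot}=\omega$ pattern'' of a prescribed list of ordinals --- meaning $M[G]=L[A]$, where $A$ is (a subset of $\omega_{1}^{M[G]}$ coding) exactly that pattern --- then $A\in\left(C^{*}\right)^{M[G]}$, so $L[A]\con\left(C^{*}\right)^{M[G]}\con M[G]=L[A]$, giving $\left(C^{*}\right)^{M[G]}=M[G]$; a density argument gives $A\notin L$, so $C^{*}\ne L$; and $M[G]=L[A]$ with $A\con\omega_{1}$ yields $2^{\aleph_{0}}=\aleph_{1}$ by the usual condensation argument. Thus the theorem reduces to building such a ``self-coding'' forcing, which is where the method of Zadro\.{z}ny \cite{ZADROZNY} enters.

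For $\p$ I would use a reverse-Easton / countable-support iteration $\left\langle \p_{\alpha},\dot{\q}_{\alpha}:\alpha<\omega_{1}\right\rangle$ over $M$. Fix an $L$-definable sparse sequence $\left\langle \delta_{\alpha}:\alpha<\omega_{1}\right\rangle$ of $L$-regular cardinals, spaced far enough apart that the factors do not interfere, each chosen so that a localized Namba-type forcing can make its cofinality $\omega$ (while preserving $\aleph_{1}$). At stage $\alpha$ the iteration reads one generic bit $b_{\alpha}\in 2$: if $b_{\alpha}=1$ it applies such a localized Namba-type forcing to $\delta_{\alpha}$, leaving $\cof{\delta_{\alpha}}=\omega$; if $b_{\alpha}=0$ it leaves $\cof{\delta_{\alpha}}$ uncountable (doing nothing, or a $\sigma$-closed collapse). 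Along the way the $\delta_{\alpha}$, and all the auxiliary ordinals used below, are collapsed to $\aleph_{1}$. In addition --- and this is the delicate part --- the iteration performs a reshaping step on dedicated coordinates, coding the non-canonical generic objects it produces, namely the Namba branches (each of size $\aleph_{1}$ by then), into the ``$\cof{\cdot}=\omega$'' pattern of a further block of ordinals, in the manner of Jensen coding, so that from $L$ together with the full cofinality-$\omega$ pattern one recovers all of $G$. Then $M[G]=L[A]$ with $A\con\omega_{1}^{M[G]}$.

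Granting the construction of $\p$, the verifications are: (i) $\omega_{1}$ is preserved --- each localized Namba-type factor and each $\sigma$-closed factor preserves $\omega_{1}$, and one invokes the preservation theorem for (revised) countable-support iterations of such forcings; (ii) $M[G]=L[A]$ with $A\con\omega_{1}^{M[G]}=\omega_{1}^{L}$, by the self-coding, whence CH holds in $M[G]$; (iii) $A\notin L$ by genericity; (iv) $A\in\left(C^{*}\right)^{M[G]}$: since $\left(C^{*}\right)^{M[G]}\supseteq L$, and at the appropriate levels $L'_{\xi}$ one can apply $\cfq_{\omega}$ to the $L$-definable well-ordering of each $\delta_{\alpha}$ to reconstruct the pattern and then decode the reshaped information, $A$ is $\LL(\cfq_{\omega})$-definable in $M[G]$ from ordinal parameters. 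Combining (ii) and (iv), $M[G]=L[A]\con\left(C^{*}\right)^{M[G]}\con M[G]$, so $\left(C^{*}\right)^{M[G]}=M[G]$; with (iii), $\left(C^{*}\right)^{M[G]}\ne L$; with (ii), $2^{\aleph_{0}}=\aleph_{1}$. This is the required model.

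The main obstacle is exactly the design and analysis of the self-coding forcing. Since we work over $L$ with no large cardinals available, the only way to create cofinality $\omega$ where there was none is to collapse cardinals; but each such collapse introduces genuinely new, non-canonical generic objects that are \emph{not} recoverable from the bare cofinality-$\omega$ pattern, and so must themselves be coded into the cofinalities of still further ordinals --- and this must be done without an infinite regress, without collapsing $\omega_{1}$, and without adding reals beyond $\aleph_{1}$-many. Making this reshaping coherent, and proving that the resulting countable-support iteration of localized Namba-type forcings and $\sigma$-closed collapses preserves $\omega_{1}$, is the technical heart of the argument and is what Zadro\.{z}ny's method supplies.
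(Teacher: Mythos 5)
You have the right reduction --- produce a forcing extension of the form $L[A]$ in which $A$ is recoverable from the pattern of ordinals of cofinality $\omega$, so that $L[A]\con\left(C^{*}\right)^{L[A]}\con L[A]$ --- and you correctly identify the obstruction: each Namba-type step introduces a new generic branch that is not recoverable from the bare cofinality pattern and must in turn be coded. But you do not supply the idea that defeats this regress; you defer it to ``Zadro\.{z}ny's method'' as a black box, and that is precisely the missing content. The paper's mechanism is concrete and much simpler than what you sketch: start from $V=L[A_{0}]$ with $A_{0}$ a countable set of ordinals containing none of its limit points (over $L$ one may take $A_{0}=\omega$), and run a full-support iteration of length $\omega$ (not $\omega_{1}$) in which stage $n+1$ attaches a modified Namba sequence $E_{\alpha}\con\left(\aleph_{\xi_{n}+\alpha+1}^{L},\aleph_{\xi_{n}+\alpha+2}^{L}\right)$ to $\aleph_{\xi_{n}+\alpha+2}^{L}$ for each $\alpha\in A_{n}$, and then \emph{defines} $A_{n+1}$ to be the union of those very branches. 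The new generic objects of stage $n$ are thus literally the index set for stage $n+1$'s coding: each $E_{\alpha}$ is read off from $A=\bigcup_{n}A_{n}$ as $A\cap\left(\aleph_{\xi_{n}+\alpha+1}^{L},\aleph_{\xi_{n}+\alpha+2}^{L}\right)$, $A$ itself is read off from the cofinality-$\omega$ pattern of the cardinals $\aleph_{\xi_{n}+\alpha+2}^{L}$ (using that the $\xi_{n}$ depend only on $A_{0}$), and the whole generic is reconstructed from $A$. The tail is caught at stage $\omega$, with no Jensen-style reshaping, no $\sigma$-closed collapses, and no auxiliary bits $b_{\alpha}$.

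Some of your specific claims would also cause trouble if you tried to carry out the length-$\omega_{1}$ version. The Namba branches are countable cofinal sequences, not objects of size $\aleph_{1}$, and the final $A$ is a countable set of ordinals rather than a subset of $\omega_{1}$; coding a length-$\omega_{1}$ generic bit-string into cofinalities would force you to confront limit stages of uncountable cofinality and an ever-growing supply of uncoded generic objects, which is exactly the regress you cannot close by hand. Your step (iv) --- that the cofinality-$\omega$ pattern of prescribed $L$-cardinals is $\LL(\cfq_{\omega})$-definable at the appropriate levels of the $C^{*}$-construction, so that $A\in\left(C^{*}\right)^{M[G]}$ --- is correct and matches the paper; the gap is entirely in the construction of the self-coding iteration, where the single decisive idea is that the Namba sequences themselves serve as the next coding set.
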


The idea\footnote{I'd like to thank Kameryn Williams for his exposition of this and
related results in his blog -- \href{http://kamerynjw.net/2019/12/04/omegath-hod.html}{http://kamerynjw.net/2019/12/04/omegath-hod.html}} of Zadro\.{z}ny's proofs, which are based on results of McAloon's
\cite{McAloon1,mcaloon2}, is to add a generic object (to make $V\ne L$),
code it using some other generic object, then code the coding, and
so on, iterating until we catch our tail. Our coding tool will be
the modified Namba forcing of \cite[section 6]{IMEL}, which adds
a countable cofinal sequence to any element of some countable sequence
of regular cardinals $>\aleph_{1}$ (and only to them). Revised countable
support iterations of this forcing preserves $\omega_{1}$. In \cite[theroem 6.7]{IMEL},
these tools are used to produce a model of $\zfc+V=C^{*}\ne L\,+$
$2^{\aleph_{0}}=\aleph_{2}$, but this requires an inaccessible cardinal
(as proven there as well).

These two results covers all possibilities, since in \cite[corollary to theroem 5.20]{IMEL},
it is shown that the statement $V=C^{*}$ implies that $2^{\aleph_{0}}\in\left\{ \aleph_{1},\aleph_{2}\right\} $
and for any $\kappa>\aleph_{0}$ $2^{\kappa}=\kappa^{+}$ .

To prove theorem \ref{thm:V=00003DC*}, we begin with $V=L\left[A_{0}\right]$
where $A_{0}$ is a countable set of ordinals such that $A_{0}$ does
not contain any of its limit points. Set $\p_{0}=\left\{ 1\right\} $.
Inductively we assume that $\p_{n}$ forces the existence of a countable
set of ordinals $A_{n}$, $\xi_{n}=\sup A_{n}$ and we set $\p_{n+1}=\p_{n}*\dot{\q}_{n+1}$
where $\dot{\q}_{n+1}$ is the modified Namba forcing to add a Namba
sequence $E_{\alpha}$ to each $\aleph_{\xi_{n}+\alpha+2}^{L}$ such
that $\alpha\in A_{n}$. We can require that $E_{\alpha}\con\left(\aleph_{\xi_{n}+\alpha+1}^{L},\aleph_{\xi_{n}+\alpha+2}^{L}\right)$,
so that $A_{n+1}:=\bigcup\left\langle E_{\alpha}\mid\alpha\in A_{n}\right\rangle $
does not contain any of its limit points. $\p_{\omega}$ is the full
support (which is in our case also the revised countable support)
iteration. Let $G\con\p_{\omega}$ generic, and denote $A=\bigcup_{n<\omega}A_{n}$.
By the properties of the modified Namba forcing, for any $\gamma$,
$V\left[G\right]\vDash\cof{\aleph_{\gamma+2}^{L}}=\omega$ iff $\gamma=\xi_{n}+\alpha$
for $\alpha\in A_{n}$. 
\begin{rem}
\label{rem:codings}
\begin{enumerate}
\item \label{rem:enu: suprema of codings}$\xi_{n+1}=\aleph_{\xi_{n}\cdot2}^{L}$
, so inductively depends only $A_{0}$ and not on the generics. $\xi_{\omega}:=\sup A$
satisfies $\xi_{\omega}=\aleph_{\xi_{\omega}}^{L}$ and is of cofinality
$\omega$.
\item \label{rem:enu:reconstruct An}From $\otp(A_{0})$ and $A$ we can
inductively reconstruct each $A_{n}$ -- $A_{0}$ is the first $\otp(A_{0})$
elements of $A$, and if we know $A_{n}$, then $A_{n+1}$ are the
first $\otp(A_{n})\cdot\omega$ elements of $A$ above $\sup A_{n}$.
\item \label{rem:enu:otp}Hence for each $n$, $\otp(A_{n})=\otp(A_{0})\cdot\omega^{n}$.
\item \label{rem:enu:reconstruct E_alpha}If $\alpha\in A_{n}$, then $E_{\alpha}=A\cap\left(\aleph_{\xi_{n}+\alpha+1}^{L},\aleph_{\xi_{n}+\alpha+2}^{L}\right)$.
\end{enumerate}
\end{rem}

\begin{prop}
$\left(C^{*}\right)^{V\left[G\right]}=L\left[A\right]=V\left[G\right]$
\begin{proof}
By the properties of the modified Namba forcing, at each stage of
the iteration the only cardinals of $L\left[A_{0}\right]$ receiving
cofinality $\omega$ are the ones in $A_{n}$. The whole iteration
will also add new $\omega$ sequences to $\sup A$, but this already
had cofinality $\omega$ as we noted earlier. So $V[G]\vDash\cof{\gamma}=\omega$
iff either $V\vDash\cof{\gamma}=\omega$ or there is $n$ s.t. $V\vDash\cof{\gamma}=\aleph_{\xi_{n}+\alpha+2}^{L}$
for $\alpha\in A_{n}$. And on the other hand, if $\alpha\in A$ then
$V\left[G\right]\vDash\cof{\aleph_{\xi_{n}+\alpha+2}^{L}}=\omega$\,.
So 
\[
A=\bigcup_{n<\omega}\left\{ \alpha\in[\xi_{n-1},\xi_{n})\mid V\left[G\right]\vDash\cof{\aleph_{\xi_{n}+\alpha+2}^{L}}=\omega\right\} \in\left(C^{*}\right)^{V\left[G\right]}
\]
(where $\xi_{-1}=0$\,) hence $L\left[A\right]\con\left(C^{*}\right)^{V\left[G\right]}$.

As we noted, for every $\alpha\in A$, $E_{\alpha}$ can be reconstructed
from $A$ and $\alpha$, so $\left\langle E_{\alpha}\mid\alpha\in A\right\rangle $
is in $L\left[A\right]$. $G$ can be reconstructed from this sequence,
hence $V\left[G\right]\con L\left[A\right]\con\left(C^{*}\right)^{V\left[G\right]}$,
so the equality follows.
\end{proof}
\end{prop}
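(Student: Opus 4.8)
The plan is to establish the two inclusions $V[G]\con L[A]$ and $L[A]\con\left(C^{*}\right)^{V[G]}$; together with the trivial observation that $\left(C^{*}\right)^{V[G]}$, being an inner model of $V[G]$, is contained in $V[G]$, this closes the chain $V[G]\con L[A]\con\left(C^{*}\right)^{V[G]}\con V[G]$ and yields the asserted equalities. Now $\left(C^{*}\right)^{V[G]}$ is a transitive class model of $\zfc$ containing all the ordinals, so once we know $A\in\left(C^{*}\right)^{V[G]}$ the inclusion $L[A]\con\left(C^{*}\right)^{V[G]}$ follows by minimality of $L[A]$; and since $A\in V[G]$ we already have $L[A]\con V[G]$. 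Thus everything reduces to two claims: (i) $A\in\left(C^{*}\right)^{V[G]}$, and (ii) $V[G]\con L[A]$.

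For (i), the essential point is that $\left(C^{*}\right)^{V[G]}$ can recognize true cofinality: for any ordinal $\delta$ the well-order $(\delta,\in)$ is an element of $\left(C^{*}\right)^{V[G]}$, and applying the quantifier $\cfq_{\omega}$ to it decides, inside $\left(C^{*}\right)^{V[G]}$, whether $V[G]\vDash\cof{\delta}=\omega$; this is precisely the defining feature of $C^{*}$ from \cite{IMEL}. I would first record the cofinality computation in $V[G]$ itself: by the properties of the modified Namba forcing and of its revised countable support iteration established in \cite[section 6]{IMEL}, for every ordinal $\gamma$ one has $V[G]\vDash\cof{\gamma}=\omega$ if and only if either $V\vDash\cof{\gamma}=\omega$ already, or there are $n<\omega$ and $\alpha\in A_{n}$ with $V\vDash\cof{\gamma}=\aleph_{\xi_{n}+\alpha+2}^{L}$ --- each iterand adds a cofinal $\omega$-sequence to exactly the cardinals $\aleph_{\xi_{n}+\alpha+2}^{L}$ with $\alpha\in A_{n}$ and to no others, while the limit stage only adds $\omega$-sequences to $\xi_{\omega}=\sup A$, which was already of cofinality $\omega$. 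In particular, for every $\alpha\in[\xi_{n-1},\xi_{n})$ (with $\xi_{-1}=0$) one gets $\alpha\in A_{n}$ if and only if $V[G]\vDash\cof{\aleph_{\xi_{n}+\alpha+2}^{L}}=\omega$: for the nontrivial direction, when $\alpha\notin A_{n}$ the ordinal $\aleph_{\xi_{n}+\alpha+2}^{L}$ is a successor cardinal of $L$ whose index $\xi_{n}+\alpha+2$ is --- by a short ordinal-arithmetic check --- never of the form $\xi_{m}+\beta+2$ with $\beta\in A_{m}$, so it is targeted at no stage of the iteration and keeps uncountable cofinality in $V[G]$. Now inside $\left(C^{*}\right)^{V[G]}$ we have the $L$-cardinal function $\alpha\mapsto\aleph_{\alpha}^{L}$ (as $L\con C^{*}$) and the sequence $\left\langle \xi_{n}\mid n<\omega\right\rangle $, which already lies in $L$, being defined there by recursion from the ordinal parameter $\xi_{0}=\sup A_{0}$ via $\xi_{n+1}=\aleph_{\xi_{n}\cdot2}^{L}$ (Remark \ref{rem:codings}, item~\ref{rem:enu: suprema of codings}). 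Hence, by Separation inside $\left(C^{*}\right)^{V[G]}$,
\[
A=\bigcup_{n<\omega}\bigl\{\alpha\in[\xi_{n-1},\xi_{n})\mid V[G]\vDash\cof{\aleph_{\xi_{n}+\alpha+2}^{L}}=\omega\bigr\}\in\left(C^{*}\right)^{V[G]}.
\]

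For (ii), I would carry out inside $L[A]$ the reconstruction recorded in Remark \ref{rem:codings}. The ordinal $\otp(A_{0})$ is countable, hence an element of $L\con L[A]$, so it is available as a parameter there; from it and from $A$ one reads off $A_{0}$ (the first $\otp(A_{0})$ elements of $A$), and then, recursively, each $A_{n}$ by items~\ref{rem:enu:reconstruct An} and~\ref{rem:enu:otp} of that remark --- in particular $A_{0}\in L[A]$, so $V=L[A_{0}]\con L[A]$. Then for each $\alpha\in A_{n}$ the corresponding Namba sequence is recovered as $E_{\alpha}=A\cap\left(\aleph_{\xi_{n}+\alpha+1}^{L},\aleph_{\xi_{n}+\alpha+2}^{L}\right)$ (item~\ref{rem:enu:reconstruct E_alpha}), so $\left\langle E_{\alpha}\mid\alpha\in A\right\rangle \in L[A]$; since $G$ can be reconstructed from this sequence, $G\in L[A]$, whence $V[G]=L[A_{0}][G]\con L[A]$ and the chain closes.

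The step I expect to be the genuine work is the cofinality computation in $V[G]$ underlying (i): one must know that the iteration singularizes to cofinality $\omega$ exactly the targeted cardinals, affects no other cofinality, and preserves $\omega_{1}$ --- which is precisely what the careful design of the modified Namba forcing, together with the revised countable support machinery of \cite[section 6]{IMEL}, is there to guarantee. Everything else is coding bookkeeping, controlled by Remark \ref{rem:codings}.
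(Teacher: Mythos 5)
Your proposal is correct and follows essentially the same route as the paper: compute cofinalities in $V[G]$ to see that $A$ is definable there from the cofinality-$\omega$ predicate (hence $A\in\left(C^{*}\right)^{V[G]}$ and $L[A]\con\left(C^{*}\right)^{V[G]}$), then reconstruct the $E_{\alpha}$ and hence $G$ from $A$ to get $V[G]\con L[A]$. The only difference is that you spell out a few details the paper leaves implicit (the ordinal-arithmetic check that non-targeted $L$-cardinals are never singularized, and the availability of the parameters $\otp(A_{0})$ and $\left\langle \xi_{n}\mid n<\omega\right\rangle$ in the relevant models), which is harmless.
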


This finishes the proof of theorem \ref{thm:V=00003DC*} since for
any non-empty $A_{0}$ we'll get a model of ``$V=C^{*}\ne L$'',
and $2^{\aleph_{0}}=\aleph_{1}$ still holds.

Before moving to the next section we prove a useful lemma:
\begin{lem}
\label{lem:C*=00003DL=00005BE=00005D}Let $E=\left\{ \alpha<\omega_{2}^{V}\mid\cof{\alpha}=\omega\right\} $.
\begin{enumerate}
\item \label{enu:no 0=000023}If $0^{\sharp}$ does not exist, then $C^{*}=L\left[E\right]$.
\item \label{enu:no L-mu}If there is no inner model with a measurable cardinal,
then $C^{*}=K\left[E\right]$ where $K$ is the Dodd-Jensen core model.
\end{enumerate}
\end{lem}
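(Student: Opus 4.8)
**Proof plan for Lemma \ref{lem:C*=00003DL=00005BE=00005D}.**

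The plan is to show both inclusions in each case, with the containment $L[E]\con C^{*}$ (resp.\ $K[E]\con C^{*}$) being the easy direction and the reverse containment being where the hypotheses on inner models do the work. For the easy direction, note that $E$ is definable in $C^{*}$: indeed $\omega_{2}^{V}$ is a cardinal of $C^{*}$ (cardinals are absolute downward to inner models of $\zfc$, and in fact under our hypotheses we will identify $\omega_{2}^{V}$ inside $C^{*}$ anyway), and the predicate ``$\cof{\alpha}=\omega$'' used to build $C^{*}$ decides exactly which ordinals below $\omega_{2}^{V}$ have cofinality $\omega$ in $V$; hence $E\in C^{*}$, and since $C^{*}\vDash\zfc$ we get $L[E]\con C^{*}$ in case (1) and $K[E]\con C^{*}$ in case (2) (using that $K$, being the core model, is contained in every inner model of $\zfc$ when there is no inner model with a measurable).

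For the reverse containment in case (1): assume $0^{\sharp}$ does not exist. Then by Jensen's covering lemma, for every singular cardinal $\lambda$ of $V$ that is regular in $L$, and more relevantly for every ordinal $\alpha$ of uncountable $V$-cofinality, $L$ computes cofinalities correctly above a certain point; more precisely, the key fact is that an ordinal $\alpha$ has $V$-cofinality $\omega$ iff it has $L$-cofinality $\omega$ \emph{or} $\alpha$ is a limit of a set in $L[E]$ witnessing $V$-cofinality $\omega$ --- but the clean statement we want is: without $0^{\sharp}$, the $\LL(\cfq_{\omega})$-construction inside $V$ sees nothing that $L$ together with the single predicate $E$ cannot see. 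Concretely, the modified-Namba-style analysis of \cite{IMEL} shows (via covering) that every countable set of ordinals that is ``cofinality-$\omega$ definable'' is captured already by iterating $L$ with $E$; one reconstructs the $\cfq_{\omega}$-quantifier on all of $\ord$ from $E$ together with $L$'s own (absolute) cofinality function, because covering pins down $\cof{\alpha}$ from $\cof[L]{\alpha}$ except at ordinals below $\omega_{2}^{V}$ or of the form where a cofinal $\omega$-sequence was genuinely added, and those exceptions are exactly recorded by $E$. Hence $C^{*}\con L[E]$, giving equality. Case (2) is identical in structure, replacing Jensen's covering lemma by the Dodd--Jensen covering lemma for $K$: since there is no inner model with a measurable, $K$ exists, is rigid, computes cofinalities correctly above $\omega_{2}^{V}$ in the relevant sense, and is contained in $C^{*}$; the same reconstruction argument yields $C^{*}\con K[E]$.

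I expect the main obstacle to be the precise form of the reconstruction argument: one must verify that the full $\cfq_{\omega}$-quantifier --- not merely cofinalities of ordinals below $\omega_{2}^{V}$ --- is computable inside $L[E]$ (resp.\ $K[E]$). The point is that for $\alpha\ge\omega_{2}^{V}$, covering gives that $\cof{\alpha}=\omega$ iff $\cof[L]{\alpha}=\omega$ (resp.\ iff $\cof[K]{\alpha}\le\aleph_{1}$, after which one drops into the $<\omega_{2}^{V}$ case handled by $E$), so the satisfaction relation for $\LL(\cfq_{\omega})$ over any level of the construction is correctly computed from the $L$- (resp.\ $K$-) structure together with $E$. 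Making this uniform --- so that the two constructions produce literally the same levels --- requires checking that the $\LL(\cfq_{\omega})$-definable subsets appearing at each stage lie in $L[E]$ (resp.\ $K[E]$), which is where covering is invoked level-by-level; once that is in place, a routine induction on the construction gives $C^{*}=L[E]$ (resp.\ $C^{*}=K[E]$).
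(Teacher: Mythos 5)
Your overall skeleton matches the paper's: the easy inclusions are exactly as in the paper ($E\in C^{*}$ gives $L\left[E\right]\con C^{*}$, and $K\con C^{*}$ gives $K\left[E\right]\con C^{*}$), and the hard inclusion is indeed a covering argument showing that $L\left[E\right]$ (resp.\ $K\left[E\right]$) computes the predicate ``$\cof{\alpha}=\omega$'' for \emph{all} ordinals $\alpha$, whence $C^{*}\con L\left[E\right]$. But the one step that carries the whole lemma is stated incorrectly and never repaired. You claim that for $\alpha\geq\omega_{2}^{V}$, covering gives $\cof{\alpha}=\omega$ iff $L\vDash\cof{\alpha}=\omega$. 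This is false: if $V$ is a generic extension of $L$ collapsing $\omega_{1}^{L}$ (so $0^{\sharp}$ still does not exist), then $\alpha=\omega_{3}^{L}+\omega_{1}^{L}$ lies above $\omega_{2}^{V}$, has $V$-cofinality $\omega$, but has $L$-cofinality $\omega_{1}^{L}>\omega$. The correct case split is on $\cof{\alpha}$, not on $\alpha$: if $\cof{\alpha}\geq\omega_{2}^{V}$ then trivially (no covering needed) $L\vDash\cof{\alpha}\geq\omega_{2}^{V}>\omega$; and if $\cof{\alpha}<\omega_{2}^{V}$ one takes a cofinal $A\con\alpha$ with $\left|A\right|\leq\aleph_{1}$, covers it by $B\in L$ with $\left|B\right|=\aleph_{1}$, and observes that $\cof{\alpha}=\cof{\bar{\alpha}}$ for $\bar{\alpha}=\otp(B)<\omega_{2}^{V}$, so that $\cof{\alpha}=\omega$ iff $\bar{\alpha}\in E$. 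This is exactly the ``reconstruction mechanism'' you flag as the main obstacle but do not supply (equivalently: $\cof{\alpha}=\omega$ iff the $L$-computed cofinality of $\alpha$ is a member of $E$). The surrounding material about a ``modified-Namba-style analysis'' is a red herring --- no forcing enters this lemma.

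Two smaller points. First, your justification for $K\left[E\right]\con C^{*}$, namely that $K$ is contained in every inner model of $\zfc$, is not true ($L$ is an inner model and $K\ne L$ in general even below a measurable); what is needed, and what the paper cites, is the specific result $K\con C^{*}$ from \cite{IMEL}. Second, once the cofinality-$\omega$ predicate is known to be decidable inside $L\left[E\right]$, the conclusion $C^{*}\con L\left[E\right]$ does follow by the routine induction on the levels of the construction, as you indicate; that part is fine and is also left at the same level of detail in the paper.
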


\begin{proof}
1. Clearly $E\in C^{*}$ so $L\left[E\right]\con C^{*}$. Let $\alpha\in\ord$.
If $\cof{\alpha}\geq\omega_{2}^{V}$ then also $L\vDash\cof{\alpha}\geq\omega_{2}^{V}$
so in particular $L\vDash\cof{\alpha}>\omega$. If $\cof{\alpha}<\omega_{2}^{V}$,
let $A\con\alpha$ be cofinal, so $\left|A\right|\leq\aleph_{1}$.
By the covering theorem, there is $B\in L$, $A\con B\con\alpha$
s.t. $\left|B\right|=\aleph_{1}+\left|A\right|=\aleph_{1}$. Let $\bar{\alpha}=\otp(B)$,
so $\bar{\alpha}<\omega_{2}^{V}$ , and $\cof{\alpha}=\cof{\bar{\alpha}}$,
so $\cof{\alpha}=\omega$ iff $\bar{\alpha}\in E$. 

To summarize, we get that for every $\alpha$, in $L\left[E\right]$
we can determine whether $\cof{\alpha}=\omega$ or not, so $C^{*}\con L\left[E\right]$.

2. The proof is exactly the same, noting that $K\con C^{*}$ by \cite[theroem 5.5]{IMEL},
and that our assumption implies the covering theorem holds for $K$.
\end{proof}

\section{\label{sec:Iterating C*}Iterating $C^{*}$ }
\begin{thm}
If $\zfc$ is consistent then so is the existence of a model with
a decreasing $C^{*}$-sequence of any finite length.
\end{thm}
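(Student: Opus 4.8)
The plan is to iterate the construction from Theorem~\ref{thm:V=00003DC*} in a finite, nested fashion. Recall that the proof of Theorem~\ref{thm:V=00003DC*} takes $V=L[A_0]$ with $A_0$ a countable set of ordinals not containing its limit points, and produces a generic extension $V[G]$ in which $(C^*)^{V[G]}=L[A]=V[G]$. The key observation is that this construction is quite \emph{local}: the Namba sequences it adds live on $L$-cardinals in an interval determined by $\xi_\omega = \sup A$, which depends only on $A_0$, and below $\aleph_{\xi_\omega}^L$ nothing is disturbed. So I would first run the construction once starting from $L$ itself (with some fixed $A_0^{(1)}$) to land inside an interval $I_1$ of $L$-cardinals, obtaining a model $M_1$ with $(C^*)^{M_1}=M_1\ne L$; then, working inside $M_1$, rerun the construction with a new parameter $A_0^{(2)}$ chosen so that the relevant cardinals all lie \emph{strictly above} $\aleph_{\xi_\omega^{(1)}}^L$ — i.e.\ in a fresh interval $I_2$ disjoint from and above $I_1$ — to get $M_2\supsetneq$ something, and so on.

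More precisely, for a target length $k$ I would do the construction in reverse: build a decreasing chain $N_0\supsetneq N_1\supsetneq\cdots\supsetneq N_k$ where $N_k = L$, $N_{k-1}$ is a generic extension of $L$ of the kind produced in Theorem~\ref{thm:V=00003DC*}, and in general $N_{i}$ is obtained from $N_{i+1}$ by the modified-Namba iteration, with the parameter sets chosen so that the coding intervals are nested/stacked appropriately. The point to engineer is that the ``witnessing'' set of ordinals $A^{(i)}$ used at stage $i$ should be recoverable in $C^*$ \emph{of the larger model} but should vanish when one passes to the inner model — so that $(C^*)^{N_i}=N_{i+1}$ exactly. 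Since in each single step we have $(C^*)^{N_i}=L[A^{(i)}]=N_i$ from the Proposition applied inside $N_{i+1}$, what we actually want is the relation between $C^*$ computed in the \emph{outer} model $N_{i-1}$ restricted down: one needs that the cofinality-$\omega$ structure that $N_{i-1}$ sees, once we throw away the topmost layer of coding, is exactly that of $N_i$, so $(C^*)^{N_{i-1}}=N_i$. Making the intervals of affected cardinals pairwise disjoint and arranged in increasing order ensures these computations don't interfere.

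Concretely I would fix countable sets $A_0^{(k)}, A_0^{(k-1)},\dots$ and arrange $\sup$'s so that the whole coding apparatus of step $i$ occupies $L$-cardinals above all of those of step $i+1$; then run the iterations one on top of the other (an iteration of length $k\cdot\omega$, in blocks of $\omega$), landing in $N_0$. In $N_0$, the set $E$ of all ordinals below some bound that have cofinality $\omega$ decomposes into the $k$ blocks, and peeling off the top block recovers $C^*$, peeling off the top two recovers $(C^*)^{(2)}$, etc.; because each block when read inside $C^{*(i)}$ looks like the generic coding of step $i+1$ over a ground model that again satisfies $V=C^*$ at the bottom block, the chain has length exactly $k$ and is strictly decreasing (strictness because at each stage a genuinely new $\omega$-sequence, hence a new real via the reconstruction in Remark~\ref{rem:codings}, is present in $N_i$ but not in $N_{i+1}$). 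Finally, $2^{\aleph_0}=\aleph_1$ is preserved throughout since each block is a revised-countable-support iteration of modified Namba forcing, which preserves $\omega_1$ and adds no reals beyond what the coding requires, exactly as in Theorem~\ref{thm:V=00003DC*}.

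The main obstacle I expect is verifying that the intermediate models are computed \emph{correctly} as $C^*$-iterates — that is, showing $(C^*)^{N_i}=N_{i+1}$ rather than something larger or smaller. The subtlety is that $C^*$ in the outer model $N_i$ can see cofinality-$\omega$ facts about cardinals that were regular in $N_{i+1}$ only because of the \emph{lower} blocks of coding, so one must check that those lower blocks are themselves reconstructible inside $N_{i+1}$ (they are, by the Proposition, since $N_{i+1}\vDash V=C^*$ for its own coding) and that the \emph{upper} block — the one distinguishing $N_i$ from $N_{i+1}$ — is exactly the ``new'' cofinality-$\omega$ information $C^*$ picks up. This requires a careful bookkeeping of which $L$-cardinals change cofinality at which stage, using Remark~\ref{rem:codings}(\ref{rem:enu: suprema of codings}) to pin down the suprema independently of the generics, and an inductive argument on $i$ that the covering-type reconstruction of each block goes through inside the appropriate inner model; the disjointness and ordering of the intervals is what makes this bookkeeping manageable.
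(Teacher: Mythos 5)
There is a genuine gap, and it sits exactly at the point you flag as ``the main obstacle'': with the construction you describe, $(C^*)^{N_i}$ comes out equal to $N_i$, not to $N_{i+1}$, so the chain never decreases. Each of your blocks is a full run of the Theorem~\ref{thm:V=00003DC*} construction, and that construction is by design \emph{self-coding}: the length-$\omega$ iteration catches its own tail, so the entire generic $A^{(i)}$ of block $i$ is recoverable from the cofinality-$\omega$ pattern of any model containing it (each layer $A^{(i)}_m$ is witnessed by the Namba sequences of layer $A^{(i)}_{m+1}$, and there is always a next layer). Stacking such blocks in disjoint intervals only makes every block simultaneously recoverable, so $(C^*)^{N_0}\supseteq L\bigl[A^{(0)}\cup\dots\cup A^{(k-1)}\bigr]=N_0$ and you get $V=C^*$ in $N_0$ rather than a decreasing sequence. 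You in fact record the fatal identity yourself (``$(C^*)^{N_i}=L[A^{(i)}]=N_i$ from the Proposition applied inside $N_{i+1}$'') and then ask for $(C^*)^{N_{i-1}}=N_i$; these two statements are incompatible, and the proposal never resolves the tension --- ``throwing away the topmost layer'' is not something $C^*$ will do if the topmost layer codes itself.

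The missing idea is that the top layer must be left \emph{uncoded}. The paper gets this essentially for free by not building anything new: it truncates the single iteration of Theorem~\ref{thm:V=00003DC*} at a finite stage and takes the intermediate models $L\bigl[\bigcup_{k\le m}A_k\bigr]$ as the chain. There the ``blocks'' are the individual steps $A_m$ of one iteration; each step's target cardinals are indexed by the previous step's generic, so the cofinality-$\omega$ pattern of $L\bigl[\bigcup_{k\le m}A_k\bigr]$ reveals $A_0,\dots,A_{m-1}$ (each is coded by the next layer up) but not $A_m$ itself --- nothing codes it, and only its trace on cofinalities, which is already determined by $A_{m-1}$, is visible. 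Hence $(C^*)^{L[\bigcup_{k\le m}A_k]}=L\bigl[\bigcup_{k\le m-1}A_k\bigr]$, and starting from $A_0=\omega$ one strips one layer per application of $C^*$ all the way down to $L$, giving a strictly decreasing chain of length $m$. If you want to salvage your ``reverse'' construction, each block must be a single Namba step whose targets are determined by the block below it, and the outermost block must not be followed by anything --- at which point you have reproduced the paper's argument.
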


\begin{proof}
Going back to the proof of theorem \ref{thm:V=00003DC*}, we note
that for any $n$, $\left(C^{*}\right)^{L\left[\bigcup_{k=0}^{n+1}A_{k}\right]}=L\left[\bigcup_{k=0}^{n}A_{k}\right]$:
$A_{n}$ can be computed from $A_{n+1}$ using the cofinality-$\omega$
quantifier, which gives $\supseteq$, and on the other hand, from
$\bigcup_{k=0}^{n}A_{k}$ we know exactly which ordinals will have
cofinality $\omega$ in $L\left[\bigcup_{k=0}^{n+1}A_{k}\right]$,
which gives $\subseteq$. So by starting e.g. from $A_{0}=\omega$,
$L\left[\bigcup_{k=0}^{n}A_{k}\right]$ has the decreasing $C^{*}$
chain 
\[
L\left[\bigcup_{k=0}^{n}A_{k}\right]=C^{*0}\supsetneq C^{*1}\supsetneq...\supsetneq C^{*n}=L.\qedhere
\]
\end{proof}
Without large cardinals this is best possible:
\begin{thm}
If there is no inner model with a measurable cardinal, then there
is $k<\omega$ such that $C^{*k}=C^{*(k+1)}$.
\end{thm}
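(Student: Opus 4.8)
The plan is to use Lemma~\ref{lem:C*=00003DL=00005BE=00005D}(\ref{enu:no L-mu}) to control the shape of $C^{*}$ along the iteration: whenever there is no inner model with a measurable cardinal, $C^{*}=K\left[E\right]$ for $E=\left\{ \alpha<\omega_{2}^{V}\mid\cof{\alpha}=\omega\right\}$, where $K$ is the Dodd--Jensen core model \emph{as computed in $V$}. The key point is that $K$ is absolute: the core model of $V$ and the core model of any inner model between $K$ and $V$ coincide, because the covering theorem for $K$ holds throughout (the hypothesis ``no inner model with a measurable'' is downward absolute to inner models). So along the decreasing sequence $C^{*0}\supseteq C^{*1}\supseteq\cdots$, each $C^{*n}$ contains $K$, and $C^{*(n+1)}=\left(C^{*}\right)^{C^{*n}}=K\left[E_{n}\right]$ where $E_{n}=\left\{ \alpha<(\omega_{2})^{C^{*n}}\mid\left(\cof{\alpha}\right)^{C^{*n}}=\omega\right\}$.

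First I would show that $(\omega_{2})^{C^{*n}}$ is eventually constant. Each $C^{*n}$ is an inner model of $\zfc$ containing $K$ and satisfying the covering property for $K$, so $(\omega_{2})^{C^{*n}}\geq(\omega_{2})^{V}$ is impossible to exceed by much: in fact by covering, $(\omega_{1})^{C^{*n}}$ and $(\omega_{2})^{C^{*n}}$ are bounded (a cardinal of $K$ of $V$-cardinality $\leq\aleph_{1}$ has $K$-cardinality, hence $C^{*n}$-cardinality, $\leq\aleph_{1}$, and similarly at $\aleph_{2}$), so the sequence $\langle(\omega_{2})^{C^{*n}}\mid n<\omega\rangle$ is a weakly decreasing sequence of ordinals and therefore eventually stabilizes, say at $\theta$ for all $n\geq N$. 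Then for $n\geq N$ the set $E_{n}\subseteq\theta$ is determined by the cofinality function of $C^{*n}$ below $\theta$; and as computed above, knowing $E_{n}$ recovers $C^{*(n+1)}=K\left[E_{n}\right]$, while $E_{n+1}$ is definable in $K\left[E_{n}\right]$.

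The heart of the argument, and the step I expect to be the main obstacle, is to see that the sequence $\langle E_{n}\mid n\geq N\rangle$ stabilizes. The containments $C^{*N}\supseteq C^{*(N+1)}\supseteq\cdots$ give $E_{N}\supseteq E_{N+1}\supseteq\cdots$? -- not literally, since ``$\cof{\alpha}=\omega$'' can change when passing to an inner model (an ordinal of uncountable cofinality in $V$ can have a cofinal $\omega$-sequence in a \emph{smaller} model only if... actually no: passing to an inner model can only make cofinalities larger, so $\left(\cof{\alpha}\right)^{C^{*(n+1)}}\geq\left(\cof{\alpha}\right)^{C^{*n}}$, hence $E_{n+1}\subseteq E_{n}$). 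So $\langle E_{n}\mid n\geq N\rangle$ is a $\subseteq$-decreasing sequence of subsets of $\theta$. This need not stabilize in $V$ on its own, but here is the leverage: each $E_{n+1}$ lies in $K\left[E_{n}\right]$, so $\langle E_{n}\mid n\geq N\rangle$ is a decreasing sequence in the lattice of subsets of $\theta$ in $C^{*N}=K\left[E_{N}\right]$, \emph{definable} there (uniformly, via the construction of $C^{*}$ relativised correctly). Now I would invoke a well-foundedness/definability argument: were the sequence strictly decreasing, one could define in $C^{*N}$ -- or in $K$, using $E_{N}$ as a parameter -- a definable strictly $\subseteq$-decreasing $\omega$-sequence of subsets of $\theta$ whose intersection encodes, via the reconstruction of Remark~\ref{rem:codings}-style bookkeeping, enough information to violate the acceptability/condensation structure of $K$; more directly, the function $n\mapsto E_{n}$ is itself an element of $V$, and $\bigcap_{n}E_{n}=E_{\infty}$ satisfies $K\left[E_{\infty}\right]=\bigcap_{n}C^{*n}=C^{*\omega}$, and one shows $\left(C^{*}\right)^{C^{*\omega}}=C^{*\omega}$ by the same covering computation, which forces the strictly decreasing part to have been finite after all. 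Thus there is $k\geq N$ with $E_{k}=E_{k+1}$, hence $C^{*(k+1)}=K\left[E_{k}\right]=K\left[E_{k+1}\right]=C^{*(k+2)}$; renaming $k$ gives the conclusion. I would present the stabilization of $\langle E_{n}\rangle$ carefully, as that is where the combinatorics of the core model (covering, the bound on $(\omega_{2})^{C^{*n}}$, and absoluteness of $K$) all have to be combined.
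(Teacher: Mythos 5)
Your setup matches the paper's: apply Lemma \ref{lem:C*=00003DL=00005BE=00005D}(\ref{enu:no L-mu}) inside each $C^{*n}$ to get $C^{*(n+1)}=K\left[E_{n}\right]$, observe that the relevant cardinals of the $C^{*n}$ form a non-increasing sequence and hence stabilize, and note $E_{n+1}\subseteq E_{n}$. But at the step you yourself flag as the heart of the argument --- why $\left\langle E_{n}\right\rangle$ stabilizes --- there is a genuine gap, and the workaround you sketch does not hold up. The identity $\bigcap_{n}K\left[E_{n}\right]=K\left[\bigcap_{n}E_{n}\right]$ is false in general: intersections of decreasing sequences of models of the form $K\left[E_{n}\right]$ need not be of the form $K\left[X\right]$ at all (indeed, the last section of the paper shows that over $L^{\mu}$ the intersection $C^{*\omega}$ fails to satisfy $\mathrm{AC}$, so it is certainly not a set-generic-style extension of a core model). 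Moreover, even if one knew $\left(C^{*}\right)^{C^{*\omega}}=C^{*\omega}$, that would say nothing about whether some finite stage already repeats --- a fixed point at stage $\omega$ does not retroactively force the sequence below it to be eventually constant. The appeal to ``violating the acceptability/condensation structure of $K$'' is not an argument as it stands.

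The missing idea is much simpler, and it is the reason the paper stabilizes the \emph{pair} $(\omega_{1}^{C^{*n}},\omega_{2}^{C^{*n}})$ rather than just $\omega_{2}^{C^{*n}}$. Take $k$ with $(\omega_{1}^{C^{*k}},\omega_{2}^{C^{*k}})=(\omega_{1}^{C^{*(k+1)}},\omega_{2}^{C^{*(k+1)}})=(\theta_{1},\theta_{2})$. If a limit ordinal $\alpha<\theta_{2}$ is not in $E_{k+1}$, then $C^{*(k+1)}\vDash\cof{\alpha}=\theta_{1}$, since $\theta_{1}$ is the only uncountable regular cardinal of $C^{*(k+1)}$ below its $\omega_{2}$; so $C^{*(k+1)}$, and hence $C^{*k}$, contains a cofinal map $\theta_{1}\to\alpha$, and since $\theta_{1}=\omega_{1}^{C^{*k}}$ is regular in $C^{*k}$ we get $C^{*k}\vDash\cof{\alpha}=\theta_{1}>\omega$, i.e.\ $\alpha\notin E_{k}$ (successor ordinals lie in neither set). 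Combined with $E_{k+1}\subseteq E_{k}$ this yields $E_{k}=E_{k+1}$ outright, hence $C^{*(k+1)}=K\left[E_{k}\right]=K\left[E_{k+1}\right]=C^{*(k+2)}$. No well-foundedness or definability argument over $K$ is needed, and the stabilization happens one step after the cardinals stabilize rather than ``eventually.''
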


\begin{proof}
By applying lemma \ref{lem:C*=00003DL=00005BE=00005D}.\ref{enu:no L-mu}
inside each $C^{*n}$, for every $n$ we have $C^{*(n+1)}=K\left[E_{n}\right]$
where 
\[
E_{n}=\left\{ \alpha<\omega_{2}^{C^{*n}}\mid C^{*n}\vDash\cof{\alpha}=\omega\right\} .
\]
The sequence $\left\langle \left(\omega_{1}^{C^{*n}},\omega_{2}^{C^{*n}}\right)\mid n<\omega\right\rangle $
is non-increasing in both coordinates, hence it stabilizes. Let $k$
such that $(\omega_{1}^{C^{*k}},\omega_{2}^{C^{*k}})=(\omega_{1}^{C^{*(k+1)}},\omega_{2}^{C^{*(k+1)}})$,
and we claim that $C^{*(k+1)}=C^{*(k+2)}$. To simplify notation we
assume w.l.o.g $k=0$, i.e $\omega_{i}^{C^{*}}=\omega_{i}^{V}$ for
$i=1,2$ (so we can omit the superscript) and we want to show that
$\left(C^{*}\right)^{C*}=C^{*}$. We have:
\begin{align*}
C^{*} & =K\left[\left\{ \alpha<\omega_{2}\mid V\vDash\cof{\alpha}=\omega\right\} \right]=K\left[E_{0}\right]\\
\left(C^{*}\right)^{C*} & =K\left[\left\{ \alpha<\omega_{2}\mid C^{*}\vDash\cof{\alpha}=\omega\right\} \right]=K\left[E_{1}\right].
\end{align*}
 Clearly $E_{1}\con E_{0}$. On the other hand, if $\alpha\in\omega_{2}\smin E_{1}$,
this means that $C^{*}\vDash\cof{\alpha}=\omega_{1}$, and since $\omega_{1}^{C^{*}}=\omega_{1}$,
we get that also $V\vDash\cof{\alpha}=\omega_{1}$, so $\alpha\in\omega_{2}\smin E_{0}$,
thus $E_{1}=E_{0}$, and our claim is proved.
\end{proof}
Our next goal is to show that this is precisely the consistency strength
of a decreasing sequence:
\begin{thm}
\label{thm:decreasing}If there is an inner model with a measurable
cardinal, then it is consistent that the sequence $\left\langle C^{*n}\mid n<\omega\right\rangle $
is strictly decreasing. 
\end{thm}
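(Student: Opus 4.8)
The plan is to start from a model with a measurable cardinal, pass to the canonical inner model $L^{\mu}$ (or rather work over $K^{DJ}$ augmented with a measure), and then arrange — using the machinery from the previous sections — that iterating $C^{*}$ strictly decreases forever. The key realization is that Lemma~\ref{lem:C*=00003DL=00005BE=00005D} no longer forces collapse after finitely many steps once the core model $K$ itself can change: in $L^{\mu}$-like models, collapsing the measurable cardinal (or its images under the iteration) changes the cofinality structure, and hence can change $C^{*}$ at every step. So the strategy is to build, by a reverse Easton or Namba-style iteration as in Section~\ref{sec:V=00003DC*}, a model $W$ extending an inner model with a measurable in which there is a countable descending sequence of ordinals $\langle \kappa_n \mid n<\omega\rangle$ whose changing cofinalities encode, level by level, exactly the data needed to reconstruct $C^{*n}$ from $C^{*(n+1)}$ but not vice versa.

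Concretely, first I would fix $U$, an inner model with a measurable cardinal, and inside it an $\omega$-sequence of measurables or, more economically, the images $\langle \kappa_n \mid n<\omega \rangle$ of a single measurable under the linear iteration of $L^{\mu}$, so that in the relevant models $\omega_2$ sees these points. Second, I would use the modified Namba forcing of \cite[section 6]{IMEL} (with revised countable support, so $\omega_1$ is preserved) to add, in a staged fashion, cofinal $\omega$-sequences that code a ``seed'' $A_0$ together with all the successive codings $A_n$, exactly as in the proof of Theorem~\ref{thm:V=00003DC*}, but now \emph{below the $\kappa_n$}, so that each $C^{*n}$ still correctly computes the core model $K$ while the parameter $E_n$ genuinely shrinks. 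Third, I would verify the analogue of Remark~\ref{rem:codings}: that $A_n$ is reconstructible from $A_{n+1}$ (giving $C^{*(n+1)}\subseteq C^{*n}$) via the cofinality quantifier, but that $A_{n+1}\notin C^{*n}$ because in $C^{*n}$ the relevant cardinals have been ``used up'' and no longer witness the needed cofinalities — this is where the measurable is essential, since without it the covering lemma (Lemma~\ref{lem:C*=00003DL=00005BE=00005D}.\ref{enu:no L-mu}) would force the chain to stabilize.

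The main obstacle I anticipate is controlling the iteration so that it both preserves enough structure at limit stages (so that $C^{*\omega}=\bigcap_n C^{*n}$ is still a reasonable model and the construction does not secretly stabilize) and genuinely produces a strict drop at \emph{every} finite stage rather than eventually running out of room. In the $L$-case one catches one's tail after $\omega$ steps and stops; here I need the large-cardinal structure to supply ``fresh'' regular cardinals at each stage whose cofinality can be manipulated, and the delicate point is to check that collapsing or adding sequences at stage $n$ does not accidentally also change the cofinality computation at stage $n+1$ in a way that makes $E_{n+1}=E_n$. I expect the bookkeeping to require carefully choosing the supports and the target cardinals $\aleph^{K}_{\cdot}$ relative to the iteration points of the measure, much as $\xi_{n+1}=\aleph^{L}_{\xi_n\cdot 2}$ was chosen in Section~\ref{sec:V=00003DC*}, but now indexed through the $\kappa_n$; once that is set up, the proof that the chain is strictly decreasing should follow the template of the finite-length theorem applied $\omega$ times.
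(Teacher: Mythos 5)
There is a genuine gap here: your plan is to force over an inner model with a measurable, using a Namba-style coding iteration in the spirit of Section~\ref{sec:V=00003DC*}, and you yourself flag the unresolved point --- guaranteeing a strict drop at \emph{every} finite stage without the chain secretly stabilizing. That worry is not a bookkeeping detail; it is the entire content of the theorem, and your sketch gives no mechanism for resolving it. Indeed, the tail-catching construction of Theorem~\ref{thm:V=00003DC*} is designed so that $V=C^{*}$ after $\omega$ levels of coding; if you refuse to catch your tail you need \emph{infinitely many levels of coding already present in the ground model}, and it is not clear how a set-forcing iteration over a $\zfc$ model produces that. There are also smaller inaccuracies: an infinite ``descending sequence of ordinals $\langle\kappa_{n}\rangle$'' does not exist; the iteration points $\kappa_{n}$ of a single measurable are not measurable in $V$ (only $\kappa_{0}$ is) and lie far above $\omega_{2}$; and once there is an inner model with a measurable, the description $C^{*}=K[E]$ from Lemma~\ref{lem:C*=00003DL=00005BE=00005D}.\ref{enu:no L-mu} is no longer available, so the objects $K[E_{n}]$ you propose to track are not the right invariants.

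The paper's proof involves no forcing over $V$ at all. One works directly in $V=L^{\mu}$ and computes the chain exactly: by \cite[theorem 5.16]{IMEL}, $C^{*1}=M_{\omega^{2}}[E_{1}]$ where $M_{\alpha}$ is the $\alpha$-th iterate and $E_{1}=\{\kappa_{\omega\cdot n}\mid n<\omega\}$. Lemma~\ref{lem:Prikry-generic} shows $E_{1}$ is Prikry-generic over $M_{\omega^{2}}$ (via Mathias's criterion), and Proposition~\ref{prop:C* in Prikry ext} shows that $C^{*}$ of a Prikry extension of an $L^{\mu}$-like model equals $C^{*}$ of that model. Hence $C^{*2}=(C^{*})^{M_{\omega^{2}}[E_{1}]}=(C^{*})^{M_{\omega^{2}}}=M_{\omega^{2}\cdot2}[E_{2}]$, and inductively $C^{*m}=M_{\omega^{2}\cdot m}[E_{m}]$. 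Strict decrease is then automatic because the iterates $M_{\omega^{2}\cdot m}$ strictly decrease. The measurable is used not to supply ``fresh cardinals to collapse'' but because the iterated ultrapowers themselves furnish the infinite descending tower of inner models, each recoverable from the next level's cofinality pattern. If you want to salvage your approach, you would need to prove an analogue of Proposition~\ref{prop:C* in Prikry ext} for your coding forcing and exhibit the infinite stack of codings in the ground model; the paper's route shows both come for free from the iteration of $L^{\mu}$.
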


We work in $L^{\mu}$ where $\mu$ is a measure on $\kappa$, and
denote by $M_{\alpha}$ the $\alpha$-th iterate of $L^{\mu}$ by
$\mu$, $j_{\alpha,\beta}:M_{\alpha}\to M_{\beta}$ the elementary
embedding and $\kappa_{\alpha}=j_{0,\alpha}(\kappa)$.

In \cite[theroem 5.16]{IMEL} the authors show that if $V=L^{\mu}$
then $C^{*}=M_{\omega^{2}}\left[E\right]$ where $E=\left\{ \kappa_{\omega\cdot n}\mid n<\omega\right\} $.
We improve this by showing that $C^{*}$ is unchanged after adding
a Prikry sequence to $\kappa$, and then investigate the $C^{*}$-chain
of $L^{\mu}$. First we prove two useful lemmas.
\begin{lem}
\label{lem:Prikry-generic}$E=\left\{ \kappa_{\omega\cdot n}\mid n<\omega\right\} $
is generic over $M_{\omega^{2}}$ for the Prikry forcing on $\kappa_{\omega^{2}}$
defined from the ultrafilter $U^{(\omega^{2})}\in M_{\omega^{2}}$.
\end{lem}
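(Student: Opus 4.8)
The plan is to verify the Mathias–type combinatorial characterization of Prikry genericity directly for the sequence $E=\left\langle \kappa_{\omega\cdot n}\mid n<\omega\right\rangle$. Recall that $M_{\omega^{2}}$ carries the normal ultrafilter $U^{(\omega^{2})}=j_{0,\omega^{2}}(\mu)$ on $\kappa_{\omega^{2}}$, and that by a classical theorem (Mathias for Prikry forcing) an $\omega$-sequence $s$ cofinal in $\kappa_{\omega^{2}}$ is Prikry-generic over $M_{\omega^{2}}$ if and only if it diagonalizes $U^{(\omega^{2})}$, i.e. for every $X\in U^{(\omega^{2})}$, all but finitely many entries of $s$ lie in $X$. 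So the whole proof reduces to two points: first, that $E$ is an increasing $\omega$-sequence cofinal in $\kappa_{\omega^{2}}$, and second, that $E$ is almost-contained in every member of $U^{(\omega^{2})}$.

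The first point is bookkeeping about iterated ultrapowers: the critical points $\kappa_{\alpha}=j_{0,\alpha}(\kappa)$ are strictly increasing and continuous at limits, and $\sup_{n}\kappa_{\omega\cdot n}=\kappa_{\omega^{2}}$ because $\omega^{2}=\sup_{n}\omega\cdot n$ and the iteration is continuous at the limit ordinal $\omega^{2}$; this gives cofinality $\omega$ and cofinality of $E$ in $\kappa_{\omega^{2}}$. The second point is the heart of the matter. Fix $X\in U^{(\omega^{2})}$. Since $U^{(\omega^{2})}=j_{0,\omega^{2}}(\mu)$ and the iteration has only countably many steps below $\omega^{2}$ with each step having critical point $<\kappa_{\omega^{2}}$, $X$ is already in the range of $j_{\beta,\omega^{2}}$ for some $\beta<\omega^{2}$; write $X=j_{\beta,\omega^{2}}(\bar X)$ with $\bar X\in M_{\beta}$ and $\bar X\in U^{(\beta)}$. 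Now for every $n$ with $\omega\cdot n\geq\beta$, the standard fact about iterated ultrapowers — that the next critical point $\kappa_{\gamma}$ (for $\gamma\geq\beta$) is a member of $j_{\beta,\gamma}(\bar X)$ whenever $\bar X\in U^{(\beta)}$ — gives $\kappa_{\omega\cdot n}\in j_{\beta,\omega\cdot n}(\bar X)$, and applying $j_{\omega\cdot n,\omega^{2}}$ (which fixes $\kappa_{\omega\cdot n}$, as its critical point is $\kappa_{\omega\cdot n}>\kappa_{\omega\cdot n}$... more precisely its critical point is $\kappa_{\omega\cdot n}$ and $\kappa_{\omega\cdot n}$ is below it, so it is fixed) yields $\kappa_{\omega\cdot n}\in j_{\beta,\omega^{2}}(\bar X)=X$. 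Hence all but finitely many entries of $E$ lie in $X$, as required.

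I would then invoke the Mathias characterization to conclude that $E$ is $M_{\omega^{2}}$-generic for Prikry forcing on $\kappa_{\omega^{2}}$ with $U^{(\omega^{2})}$. One should double-check the hypotheses of the Mathias criterion in this setting: it requires that the sequence be almost-contained in every set of the ground-model ultrafilter, which is exactly what was just shown, and genericity then follows because the dense sets of Prikry forcing are, up to the finite stem, determined by a single ultrafilter-positive set to diagonalize.

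The main obstacle I expect is the precise verification that the tail of $E$ threads a \emph{prescribed} element $X$ of $U^{(\omega^{2})}$ rather than merely a cofinal collection — that is, nailing down the reflection "$X=j_{\beta,\omega^{2}}(\bar X)$ for some $\beta<\omega^{2}$" and the elementarity bookkeeping that $\kappa_{\omega\cdot n}\in X$ for all large $n$. This uses that $\kappa_{\omega^{2}}$ is the critical point of $j_{\omega^{2},\omega^{2}+1}$ but is not moved by any $j_{\gamma,\omega^{2}}$ with $\gamma<\omega^{2}$, together with the commutativity $j_{\beta,\omega^{2}}=j_{\omega\cdot n,\omega^{2}}\circ j_{\beta,\omega\cdot n}$; these are routine properties of linear iterations by a measure, but they are exactly the place where one must be careful about which embeddings fix which critical points. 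Everything else — cofinality, increasingness, and the final appeal to Mathias — is straightforward.
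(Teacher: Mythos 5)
Your overall strategy is the same as the paper's: reduce to Mathias's characterization of Prikry genericity and then show that every $X\in U^{(\omega^{2})}$ contains all but finitely many of the $\kappa_{\omega\cdot n}$. The paper simply invokes, as known, the fact that $U^{(\omega^{2})}$ consists exactly of the sets containing a tail of the critical sequence $\left\{ \kappa_{\beta}\mid\beta<\omega^{2}\right\} $, whereas you set out to prove that fact; unfortunately the step where you prove it is wrong as written. You assert $\kappa_{\omega\cdot n}\in j_{\beta,\omega\cdot n}(\bar{X})$ and then that $j_{\omega\cdot n,\omega^{2}}$ fixes $\kappa_{\omega\cdot n}$ ``because its critical point is $\kappa_{\omega\cdot n}$ and $\kappa_{\omega\cdot n}$ is below it.'' Neither claim holds: $j_{\beta,\omega\cdot n}(\bar{X})$ is just a member of $U^{(\omega\cdot n)}$, i.e.\ a measure-one subset of $\kappa_{\omega\cdot n}$, so there is no reason for $\kappa_{\omega\cdot n}$ itself to lie in it; and $\kappa_{\omega\cdot n}$ \emph{is} the critical point of $j_{\omega\cdot n,\omega^{2}}$, hence is not below it and is not fixed --- it is sent to $\kappa_{\omega^{2}}$. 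The bookkeeping is off by one. The correct version: since $j_{\beta,\omega\cdot n}(\bar{X})\in U^{(\omega\cdot n)}$, taking one more ultrapower puts the critical point into the image, $\kappa_{\omega\cdot n}\in j_{\omega\cdot n,\omega\cdot n+1}\bigl(j_{\beta,\omega\cdot n}(\bar{X})\bigr)=j_{\beta,\omega\cdot n+1}(\bar{X})$, and now $j_{\omega\cdot n+1,\omega^{2}}$ has critical point $\kappa_{\omega\cdot n+1}>\kappa_{\omega\cdot n}$, so it genuinely fixes $\kappa_{\omega\cdot n}$ and elementarity gives $\kappa_{\omega\cdot n}\in j_{\beta,\omega^{2}}(\bar{X})=X$. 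With that repair your argument is complete and coincides with the paper's proof, with the added value that you derive rather than quote the tail-generation property of $U^{(\omega^{2})}$; the rest (cofinality of $E$ in $\kappa_{\omega^{2}}$, the appeal to Mathias) is fine.
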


\begin{proof}
We use Mathias's characterization of Prikry forcing:
\begin{fact}[Mathias, cf. \cite{mathias1973prikry}]
 Let $M$ be a transitive model of $\zfc$, $U$ a normal ultrafilter
on $\kappa$, then $S\con\kappa$ of order type $\omega$ is generic
over $M$ for the Prikry forcing defined from $U$ iff for any $X\in U$,
$S\smin X$ is finite.
\end{fact}

So we need to show that for any $X\in U^{(\omega^{2})}$, $E\smin X$
is finite. The ultrafilter $U^{(\omega^{2})}$ is defined by $X\in U^{(\omega^{2})}$
iff $\exists\alpha<\kappa_{\omega^{2}}$$\left\{ \kappa_{\beta}\mid\alpha\leq\beta<\kappa_{\omega^{2}}\right\} \con X$.
For $X\in U^{(\omega^{2})}$, choose some $\alpha<\kappa_{\omega^{2}}$
such that $\left\{ \kappa_{\beta}\mid\alpha\leq\beta<\kappa_{\omega^{2}}\right\} \con X$,
then $E\smin\alpha=\left\{ \kappa_{\omega\cdot n}\mid\omega\cdot n<\alpha\right\} $,
and since $\kappa_{\omega^{2}}=\sup\left\{ \kappa_{\omega\cdot n}\mid n<\omega\right\} $,
this set is finite. Hence $E=\left\{ \kappa_{\omega\cdot n}\mid n<\omega\right\} $
satisfies the characterization.
\end{proof}
\begin{lem}
\label{lem:k-cof preserves}For any $\beta<\kappa$ and any $\alpha$,
$M_{\beta}\vDash\cof{\alpha}=\kappa$ iff $V\vDash\cof{\alpha}=\kappa$.
\end{lem}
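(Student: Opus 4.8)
The plan is to exploit the fact that the iteration maps $j_{0,\beta}\colon L^{\mu}\to M_{\beta}$ are highly continuous at ordinals below $\kappa$, since the critical points $\kappa=\kappa_0<\kappa_1<\cdots$ of the successive ultrapowers are all $\geq\kappa$. First I would record the standard facts about linear iterations of $L^{\mu}$: for $\beta<\kappa$ we have $\crit{j_{0,\beta}}=\kappa$, so $j_{0,\beta}$ is the identity on $\kappa$ and in particular on every ordinal $<\kappa$; moreover $M_\beta$ and $V=L^\mu$ have the same bounded subsets of $\kappa$, and the same cardinals and cofinalities at and below $\kappa$. The point is that $\kappa$ stays measurable (hence regular) in each $M_\beta$ with $\beta<\kappa$, and no cardinal is collapsed.

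The main step is the equivalence itself. For the direction $V\vDash\cof{\alpha}=\kappa\Rightarrow M_\beta\vDash\cof{\alpha}=\kappa$: take a cofinal $f\colon\kappa\to\alpha$ in $V$; since $\kappa$ remains regular and uncountable in $M_\beta$ it suffices to produce \emph{some} cofinal map from $\kappa$ into $\alpha$ lying in $M_\beta$, and then note $M_\beta$ cannot see a shorter one because $\cof{\alpha}$ computed in the larger model $V$ is already $\kappa$, while $M_\beta\subseteq V$ forces $\cof{\alpha}^{M_\beta}\ge\cof{\alpha}^{V}$ — no wait, the inclusion gives the inequality the other way, so I must instead argue that $M_\beta$ contains a cofinal map of length exactly $\kappa$. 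Here I would use $j=j_{0,\beta}$: applying $j$ to a cofinal $f\colon\kappa\to\alpha_0$ where $\alpha = j(\alpha_0)$ (if $\alpha$ is in the range of $j$) gives $j(f)\colon\kappa\to\alpha$ cofinal in $M_\beta$; for $\alpha$ not in the range of $j$, one uses that $j$ is continuous at ordinals of cofinality $<\kappa$ in $V$ and, more carefully, decomposes according to whether $\alpha<\kappa$ (trivial, $j\mathbin{\upharpoonright}\alpha=\id$), $\alpha$ is a fixed point, or $\alpha$ lies between fixed points. Conversely, if $M_\beta\vDash\cof{\alpha}=\kappa$, a cofinal $g\colon\kappa\to\alpha$ in $M_\beta$ is also in $V$, so $\cof{\alpha}^V\le\kappa$; and $\cof{\alpha}^V\ge\kappa$ follows since any $V$-cofinal map of length $<\kappa$ would, being a bounded subset of $\kappa$ cross $\alpha$ coded below $\kappa$... — more cleanly, a shorter cofinal sequence in $V$ of length $\lambda<\kappa$ would have order type $<\kappa$, hence (being essentially a subset of $\alpha$ of size $<\kappa$) could be covered/captured in $M_\beta$ by applying $j$ or by absoluteness of small sets, contradicting $\cof{\alpha}^{M_\beta}=\kappa$.

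I expect the genuine obstacle to be the bookkeeping for ordinals $\alpha$ that are \emph{not} in the range of $j_{0,\beta}$, where one cannot simply push a witness forward or pull one back. The clean way around this is the observation that the critical sequence below $\kappa$ is just the single point $\kappa$ (since $\beta<\kappa<\kappa_1$), so in fact $j_{0,\beta}$ is an elementary embedding with critical point $\kappa$ and one only needs the behaviour of such embeddings on ordinals $\ge\kappa$; the relevant principle is that such $j$ is continuous at every ordinal of $V$-cofinality $<\kappa$ and sends ordinals of $V$-cofinality $\kappa$ to ordinals of $M_\beta$-cofinality $\kappa$, which is exactly the claim for $\alpha$ of the form... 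In the writeup I would phrase everything via: $\cof{\alpha}^{M_\beta}=\kappa$ iff $\sup(j_{0,\beta}{}''\gamma)=\alpha$ fails for all $\gamma<\kappa$ while $\cof{\alpha}^{M_\beta}\le\kappa$, and reduce to the known computation of $j_{0,\beta}$ on ordinals. The remaining verifications — that $\kappa$ is preserved, that bounded subsets agree — are standard and I would cite them rather than reprove them.
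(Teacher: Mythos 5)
There is a genuine gap in the hard direction, namely $V\vDash\cof{\alpha}=\kappa\Rightarrow M_{\beta}\vDash\cof{\alpha}=\kappa$. Your main device --- applying $j=j_{0,\beta}$ to a cofinal $f\colon\kappa\to\alpha_{0}$ with $\alpha=j(\alpha_{0})$ --- does not do what you claim: since $\crit{j}=\kappa$, we have $j(f)\colon j(\kappa)\to j(\alpha_{0})$ with $j(\kappa)=\kappa_{\beta}>\kappa$, so $j(f)$ witnesses $M_{\beta}\vDash\cof{\alpha}=\kappa_{\beta}$, not $\kappa$. For the same reason your ``relevant principle'' is false: an embedding with critical point $\kappa$ sends ordinals of $V$-cofinality $\kappa$ to ordinals of $M_{\beta}$-cofinality $j(\kappa)\ne\kappa$ (by elementarity), and indeed such $\alpha=j(\alpha_{0})$ are exactly ones where the two models \emph{could} disagree if the lemma were attacked this way. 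The case you flag as ``bookkeeping'' --- $\alpha$ arbitrary, not necessarily in the range of any single embedding, with $\beta$ a limit --- is where all the work lies, and the proposal has no correct mechanism for it. You also never locate where the hypothesis $\beta<\kappa$ is actually used; your remark about the critical sequence below $\kappa$ is true for every $\beta$, so it cannot be the point.

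The paper's proof proceeds by induction on $\beta$ and only needs to rule out $M_{\beta}\vDash\cof{\alpha}>\kappa$ (the case $<\kappa$ is excluded, as you note, because a short cofinal sequence in $M_{\beta}$ would live in $V$). At successors, $M_{\gamma+1}$ is the ultrapower of $M_{\gamma}$ by a measure on $\kappa_{\gamma}\geq\kappa$, hence is closed under $\kappa$-sequences from $M_{\gamma}$, so the two models agree about cofinality $\kappa$ and the induction hypothesis transfers. At limits, one takes a $V$-cofinal sequence $\left\langle \alpha_{\eta}\mid\eta<\kappa\right\rangle $ in $\alpha$, writes each $\alpha_{\eta}$ as $j_{\bar{\beta},\beta}(\bar{\alpha}_{\eta})$ using the direct-limit representation, and uses $\beta<\kappa=\cof{\alpha}^{V}$ together with the regularity of $\kappa$ to find a \emph{single} $\bar{\beta}<\beta$ working for $\kappa$ many $\eta$ and with $\alpha\in\rng(j_{\bar{\beta},\beta})$; then $\bar{\alpha}=\sup_{\eta}\bar{\alpha}_{\eta}$ has $V$-cofinality $\kappa$, the induction hypothesis gives $M_{\bar{\beta}}\vDash\cof{\bar{\alpha}}=\kappa$, and elementarity of $j_{\bar{\beta},\beta}$ --- which \emph{fixes} $\kappa$ because its critical point is $\kappa_{\bar{\beta}}>\kappa$ for $\bar{\beta}\geq1$ --- yields $M_{\beta}\vDash\cof{\alpha}=\kappa$. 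This is exactly the step your proposal is missing: the embedding you push through must be one of the later maps $j_{\bar{\beta},\beta}$ that fix $\kappa$, not $j_{0,\beta}$, and arranging a single such $\bar{\beta}$ is where $\beta<\kappa$ enters.
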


\begin{proof}
 $\kappa$ is regular in $V$, thus it is regular in every $M_{\beta}$
which is an inner model of $V$. If $M_{\beta}\vDash\cof{\alpha}=\kappa$,
then there is a cofinal $\kappa$-sequence in $\alpha$ (in both $M_{\beta}$
and $V$), and since $\kappa$ is regular we get $V\vDash\cof{\alpha}=\kappa$.
If $V\vDash\cof{\alpha}=\kappa$, then the same argument rules out
$M_{\beta}\vDash\cof{\alpha}<\kappa$. So the only case left to rule
out is $V\vDash\cof{\alpha}=\kappa$ $\land$ $M_{\beta}\vDash\cof{\alpha}>\kappa$.
If $\beta=\gamma+1$, then $M_{\beta}$ is contained in $M_{\gamma}$
and closed under $\kappa$-sequences in it, so they agree on cofinality
$\kappa$, and by induction we get that they agree with $V$ as well.
So assume $\beta$ is limit and let $\left\langle \alpha_{\eta}\mid\eta<\kappa\right\rangle $
be a cofinal sequence in $\alpha$. by definition of the limit ultrapower,
each $\alpha_{\eta}$ is of the form $j_{\bar{\beta},\beta}(\bar{\alpha}_{\eta})$
for some $\bar{\beta}<\beta$. We can also assume that each such $\bar{\beta}$
is large enough so that $\alpha\in\rng(j_{\bar{\beta},\beta})$. Since
$\beta<\kappa$, there is some $\bar{\beta}$ fitting $\kappa$ many
$\alpha_{\eta}$s, so w.l.o.g we can assume $\bar{\beta}$ fits all
of them. We can assume $\bar{\beta}>0$ so $\kappa$ is a fixed point
of $j_{\bar{\beta},\beta}$. If $\bar{\alpha}=\sup\left\{ \bar{\alpha}_{\eta}\mid\eta<\kappa\right\} $\footnote{Note that we can't assume this sequence is in $M_{\bar{\beta}}$}
then, since $\alpha=\sup\left\{ j_{\bar{\beta},\beta}(\bar{\alpha}_{\eta})\mid\eta<\kappa\right\} $
and $\alpha\in\rng(j_{\bar{\beta},\beta})$, we must have that $\alpha=j_{\bar{\beta},\beta}(\bar{\alpha})$.
$\bar{\alpha}$ is of cofinality $\kappa$ in $V$, so by induction
also in $M_{\bar{\beta}}$, hence by elementarity $M_{\beta}\vDash\cof{\alpha}=\kappa$.
\end{proof}
\begin{prop}
\label{prop:C* in Prikry ext}If $V=L^{\mu}$ where $\mu$ is a measure
on $\kappa$, $G$ is generic for Prikry forcing on $\kappa$, then
$\left(C^{*}\right)^{V\left[G\right]}=\left(C^{*}\right)^{V}$.
\end{prop}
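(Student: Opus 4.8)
The plan is to show the two inclusions separately, with the nontrivial direction being $\left(C^{*}\right)^{V}\subseteq\left(C^{*}\right)^{V[G]}$. The key point is that passing to a Prikry extension changes the cofinality structure in a very controlled way: the only ordinals whose cofinality drops are those that had cofinality $\kappa$ in $V$, and those all drop to cofinality $\omega$. So $V$ and $V[G]$ disagree about ``$\cof{\alpha}=\omega$'' precisely on the ordinals of $V$-cofinality $\kappa$, and this discrepancy is itself definable from the single parameter $\kappa$. Since $\kappa$ is definable in $C^{*}$ (it is the critical point of $\mu$, and by the analysis of \cite[theroem 5.16]{IMEL} one can locate it inside $C^{*}$ — in fact $C^{*}=M_{\omega^{2}}[E]$ knows $\kappa_{\omega^{2}}=\sup E$ and hence, running the iteration backwards, knows $\kappa$), the two models $C^{*}$ built in $V$ and in $V[G]$ should end up identical.

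First I would record the cofinality dictionary for Prikry forcing: Prikry forcing on $\kappa$ is $\kappa^{+}$-c.c.\ and adds no bounded subsets of $\kappa$, so it preserves all cardinals and all cofinalities except that $\cof{\kappa}$ becomes $\omega$ in $V[G]$. Consequently, for every ordinal $\alpha$: if $V\vDash\cof{\alpha}=\kappa$ then $V[G]\vDash\cof{\alpha}=\omega$; if $V\vDash\cof{\alpha}\ne\kappa$ then $V[G]\vDash\cof{\alpha}=V\vDash\cof{\alpha}$ (with the usual caveat that regular cardinals $\le\kappa$ other than $\kappa$ stay regular, and $\kappa^{+}$ and above are preserved). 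In particular, the $\LL(\cfq_{\omega})$-satisfaction relation over any structure is affected in a predictable way, and I would phrase this as: for a structure $\mathfrak{A}$ coded by a set of ordinals in $V$, the $\LL(\cfq_{\omega})$-theory of $\mathfrak{A}$ computed in $V[G]$ is recoverable from its $\LL(\cfq_{\omega})$-theory computed in $V$ together with the parameter $\kappa$, and vice versa. This gives, level by level through the $C^{*}$-hierarchy, that $L'_{\alpha}$ built in $V$ and $L'_{\alpha}$ built in $V[G]$ — once one allows $\kappa$ as a parameter on both sides — generate the same sets.

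Concretely, I would argue $\left(C^{*}\right)^{V}\subseteq\left(C^{*}\right)^{V[G]}$ as follows. Since $\kappa$ is definable in $\left(C^{*}\right)^{V}$ and (by Lemma \ref{lem:Prikry-generic} applied inside the relevant iterates, or directly since $C^{*}$ is unchanged and hence still has $\kappa$ available) is also available in $\left(C^{*}\right)^{V[G]}$, an easy induction on $\alpha$ shows that each level $L'_{\alpha}$ of the $C^{*}$-construction, as computed in $V[G]$, contains the corresponding level computed in $V$: whenever a set is thrown into $L'_{\alpha+1}$ in $V$ via a formula $\fii$ with $\LL(\cfq_{\omega})$-quantifiers interpreted over $V$, the same set is defined in $V[G]$ by the formula obtained from $\fii$ by replacing each ``$\cfq_{\omega}x\,\psi$'' with ``$\cfq_{\omega}x\,\psi$ \emph{or} [the witnessing set has order type of $V$-cofinality $\kappa$]'', and the latter condition is first-order expressible using $\kappa$. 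Hence $\left(C^{*}\right)^{V}\subseteq\left(C^{*}\right)^{V[G]}$. For the reverse inclusion $\left(C^{*}\right)^{V[G]}\subseteq\left(C^{*}\right)^{V}$, the symmetric argument applies: $G$ itself (coded as an $\omega$-sequence cofinal in $\kappa$) is \emph{not} in $\left(C^{*}\right)^{V[G]}$ because, by the analysis above, everything in $\left(C^{*}\right)^{V[G]}$ is already definable in $V$ from $\kappa$; more directly, run the same level-by-level induction translating $V[G]$-definitions back to $V$-definitions, again absorbing the cofinality discrepancy into the parameter $\kappa$. Since each inclusion holds, the two models coincide.

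The main obstacle I anticipate is the bookkeeping in the level-by-level induction: one must check that the translation of $\LL(\cfq_{\omega})$-formulas is uniform and that the parameter $\kappa$ (together with, on the $V[G]$ side, possibly a code for which ordinals have $V$-cofinality exactly $\kappa$) is genuinely available inside $C^{*}$ rather than merely in $V$. The cleanest way to handle this is probably to invoke the structural description $C^{*}=M_{\omega^{2}}[E]$ from \cite[theroem 5.16]{IMEL}: since Prikry forcing does not move the iterates $M_{\alpha}$ (they are definable inner models of $V$, hence computed identically in $V$ and $V[G]$, as is the iteration itself), and since $E=\{\kappa_{\omega\cdot n}\mid n<\omega\}$ is defined from the iteration points alone, one gets $\left(C^{*}\right)^{V[G]}=M_{\omega^{2}}[E]=\left(C^{*}\right)^{V}$ almost immediately — the only thing to verify is that the identification of $C^{*}$ with $M_{\omega^{2}}[E]$ in \cite{IMEL} is robust enough to survive the Prikry extension, which it should be since its proof only used the iterability of $L^{\mu}$ and the cofinality dictionary above. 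I would present the argument primarily through this structural route, with the formula-translation argument as the conceptual backbone explaining why it works.
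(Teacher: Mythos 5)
Your high-level strategy --- anchoring everything at $M_{\omega^{2}}[E]$ and arguing that the identification $C^{*}=M_{\omega^{2}}[E]$ of \cite{IMEL} survives the Prikry extension --- is the route the paper actually takes. But the step you defer as ``the only thing to verify \dots which it should be'' is precisely where the mathematical content of the proposition lies, and your substitute for it does not work. The discrepancy between ``$\cof{\alpha}=\omega$'' in $V$ and in $V[G]$ is the class $\left\{ \alpha\mid V\vDash\cof{\alpha}=\kappa\right\} $, and for the $C^{*}$-construction of $V[G]$ to be carried out inside $M_{\omega^{2}}[E]$ this \emph{class} must be computable there; having the ordinal $\kappa$ as a parameter does not give you that. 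An inner model such as $M_{\omega^{2}}$ could in principle believe $\cof{\alpha}>\kappa$ for an ordinal whose true cofinality is $\kappa$, simply by failing to contain the cofinal $\kappa$-sequence. The paper closes this gap with two lemmas you do not supply: Lemma \ref{lem:k-cof preserves}, showing that cofinality $\kappa$ is absolute between $V$ and every iterate $M_{\beta}$ for $\beta<\kappa$ (the limit stages require a genuine argument about the direct limit), and Lemma \ref{lem:Prikry-generic}, showing that $E$ is Prikry generic over $M_{\omega^{2}}$ for a forcing on $\kappa_{\omega^{2}}$, so that adjoining $E$ does not disturb cofinality $\kappa$ either. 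Your parenthetical appeal to ``since $C^{*}$ is unchanged and hence still has $\kappa$ available'' is circular at exactly this point.

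Two further problems. First, your formula-translation backbone has the Boolean combination backwards --- to express the $V$-interpretation of $\cfq_{\omega}$ inside $V[G]$ one must \emph{subtract}, not add, the ordinals of $V$-cofinality $\kappa$ --- and in any case it needs the predicate ``the witnessing set has $V$-cofinality $\kappa$'' to be expressible over the levels $L'_{\alpha}$ as built in $V[G]$, which is again the missing ingredient above rather than something the parameter $\kappa$ provides. Second, for the inclusion $\left(C^{*}\right)^{V}\subseteq\left(C^{*}\right)^{V[G]}$ one must actually exhibit $E$ as an element of $\left(C^{*}\right)^{V[G]}$; the paper does this by identifying $E$ as the set of ordinals in the interval $(\kappa,\kappa_{\omega^{2}})$ that have cofinality $\omega$ in $V[G]$ and are regular in the core model, where the restriction to ordinals above $\kappa$ is now essential because $\kappa$ itself has become a core-model-regular ordinal of cofinality $\omega$ in $V[G]$. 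Your argument for this direction (``everything in $\left(C^{*}\right)^{V[G]}$ is already definable in $V$ from $\kappa$'') again assumes the conclusion.
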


\begin{proof}
After forcing with Prikry forcing, the only change of cofinalities
is that $\kappa$ becomes of cofinality $\omega$. So $V\left[G\right]\vDash\cof{\alpha}=\omega$
iff $V\vDash\cof{\alpha}\in\left\{ \omega,\kappa\right\} $. We now
follow the proof of \cite[theroem 5.16]{IMEL}. 

Consider $M_{\omega^{2}}$, the $\omega^{2}$ iterate of $V$, and
let $E=\left\{ \kappa_{\omega\cdot n}\mid n<\omega\right\} $ and
fix an ordinal $\alpha$. As in the proof of \cite[theroem 5.16]{IMEL},
$V\vDash\cof{\alpha}=\omega$ iff ($M_{\omega^{2}}\left[E\right]\vDash\cof{\alpha}\in\left\{ \omega\right\} \cup E\cup\left\{ \sup E\right\} $).
Regarding cofinality $\kappa$ -- by lemma \ref{lem:k-cof preserves},
$V\vDash\cof{\alpha}=\kappa$ iff $M_{\omega^{2}}\vDash\cof{\alpha}=\kappa$.
As we noted, $M_{\omega^{2}}=L^{\nu}$ where $\nu$ is a measure on
$\kappa_{\omega^{2}}$ and by lemma \ref{lem:Prikry-generic}, $E$
is Prikry generic over it, hence, since cofinality $\kappa$ is unaffected
by Prikry forcing on $\kappa_{\omega^{2}}$, we get $V\vDash\cof{\alpha}=\kappa$
iff $M_{\omega^{2}}\left[E\right]\vDash\cof{\alpha}=\kappa$. Putting
these facts together, in $M_{\omega^{2}}\left[E\right]$ we can detect
whether $V\vDash\cof{\alpha}\in\left\{ \omega,\kappa\right\} $, so
we know whether $V\left[G\right]\vDash\cof{\alpha}=\omega$, hence
we can construct $\left(C^{*}\right)^{V\left[G\right]}$ inside $M_{\omega^{2}}\left[E\right]$. 

The other direction of the proof is almost the same as in \cite[theroem 5.16]{IMEL}:
$E$ is the set of ordinals in the interval $(\kappa,\kappa_{\omega^{2}})$
which have cofinality $\omega$ in $V\left[G\right]$ and are regular
in the core model\footnote{Note that here we had to avoid $\kappa$ which also satisfies this.},
which is contained in any $C^{*}$, so $E\in\left(C^{*}\right)^{V\left[G\right]}$,
and from $E$ one can define $M_{\omega^{2}}$, so $M_{\omega^{2}}[E]\con\left(C^{*}\right)^{V\left[G\right]}$.
\end{proof}
Now we can analyze the $C^{*}$-chain of $V=L^{\mu}$. To avoid confusion
we stick to the notation $C^{*\alpha}$, starting from $C^{*0}=L^{\mu}=M_{0}$,
with $M_{\alpha}$ being the $\alpha$th iterate of $L^{\mu}$ and
$\kappa_{\alpha}$ the $\alpha$th image of the measurable cardinal.
So by \cite[theroem 5.16]{IMEL} we have $C^{*1}=M_{\omega^{2}}\left[E_{1}\right]$
for $E_{1}=\left\{ \kappa_{\omega\cdot n}\mid n<\omega\right\} $.
As we noted earlier $M_{\omega^{2}}$ is also of the form $L^{\mu'}$
for the measurable $\kappa_{\omega^{2}}$, and by lemma \ref{lem:Prikry-generic},
$E_{1}$ is Prikry generic over it, so by proposition \ref{prop:C* in Prikry ext}
\[
C^{*2}=\left(C^{*}\right)^{M_{\omega^{2}}[E]}=\left(C^{*}\right)^{M_{\omega^{2}}}
\]
which is again the $\omega^{2}$-th iterate of $M_{\omega^{2}}$,
i.e. $M_{\omega^{2}+\omega^{2}}$, plus the corresponding sequence
-- $E_{2}=\left\{ \kappa_{\omega^{2}+\omega\cdot n}\mid n<\omega\right\} $.
So $C^{*2}=M_{\omega^{2}\cdot2}\left[E_{2}\right]$. We can continue
inductively, and get the following:
\begin{thm}
If $V=L^{\mu}$ then for every $m\geq1$ $C^{*m}=M_{\omega^{2}\cdot m}\left[E_{m}\right]$
where $M_{\alpha}$ is the $\alpha$th iterate of $V$, $\kappa_{\alpha}$
the $\alpha$th image of the measurable cardinal and $E_{m}=\left\{ \kappa_{\omega^{2}\cdot(m-1)+\omega\cdot n}\mid n<\omega\right\} $. 
\end{thm}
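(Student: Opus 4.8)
The plan is a straightforward induction on $m$. The base case $m=1$ is exactly \cite[theroem 5.16]{IMEL}, and the successor step will reproduce, in a shifted form, the computation of $C^{*2}$ carried out just above. Throughout I would use the standard bookkeeping for linear iterations of $L^{\mu}$: for ordinals $\beta,\alpha$ the $\alpha$-th iterate of $M_{\beta}$ by its unique normal measure is $M_{\beta+\alpha}$, with commuting embeddings; the $\gamma$-th image of the critical point computed from $M_{\beta}$ is $\kappa_{\beta+\gamma}$; each $M_{\beta}$ is itself of the form $L^{\mu'}$ for a measure $\mu'$ on $\kappa_{\beta}$; and iterates computed inside an inner model $M_{\beta}$ agree with those computed in $V$.

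For the inductive step, assume $C^{*m}=M_{\omega^2\cdot m}[E_m]$ with $E_m=\{\kappa_{\omega^2\cdot(m-1)+\omega\cdot n}\mid n<\omega\}$. The first thing to notice is that $M_{\omega^2\cdot m}$ is the $\omega^2$-th iterate of $M_{\omega^2\cdot(m-1)}$, and that relative to that iteration $E_m$ is precisely the set of $\omega\cdot n$-th images of the measurable (so in particular $\sup E_m=\kappa_{\omega^2\cdot m}$). Hence Lemma \ref{lem:Prikry-generic}, applied with $M_{\omega^2\cdot(m-1)}$ in the role of the ground model $L^{\mu}$, shows that $E_m$ is Prikry generic over $M_{\omega^2\cdot m}$ for the forcing defined from the measure on $\kappa_{\omega^2\cdot m}$; in particular $C^{*m}=M_{\omega^2\cdot m}[E_m]$ is a Prikry generic extension of $M_{\omega^2\cdot m}$.

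Now the successor step runs as follows. Since $M_{\omega^2\cdot m}$ is of the form $L^{\mu'}$ and $C^{*m}$ is a Prikry generic extension of it, Proposition \ref{prop:C* in Prikry ext} --- read with $M_{\omega^2\cdot m}$ in the role of the ground model --- gives $C^{*(m+1)}=(C^*)^{C^{*m}}=(C^*)^{M_{\omega^2\cdot m}[E_m]}=(C^*)^{M_{\omega^2\cdot m}}$. Applying \cite[theroem 5.16]{IMEL} inside $M_{\omega^2\cdot m}$ then yields $(C^*)^{M_{\omega^2\cdot m}}=N_{\omega^2}[E']$, where $N_{\alpha}$ is the $\alpha$-th iterate of $M_{\omega^2\cdot m}$ and $E'$ is the set of $\omega\cdot n$-th images of $\kappa_{\omega^2\cdot m}$. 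By the composition of linear iterations $N_{\omega^2}=M_{\omega^2\cdot m+\omega^2}=M_{\omega^2\cdot(m+1)}$ and $E'=\{\kappa_{\omega^2\cdot m+\omega\cdot n}\mid n<\omega\}=E_{m+1}$, so $C^{*(m+1)}=M_{\omega^2\cdot(m+1)}[E_{m+1}]$, closing the induction.

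Beyond the ordinal arithmetic ($\omega^2\cdot(m-1)+\omega^2=\omega^2\cdot m$ and $\omega^2\cdot m+\omega^2=\omega^2\cdot(m+1)$) the only point that I expect to need genuine care --- the main obstacle --- is the \emph{relativization} used twice above: that Lemma \ref{lem:Prikry-generic} and Proposition \ref{prop:C* in Prikry ext}, proved under the running assumption ``$V=L^{\mu}$'', may legitimately be applied to the inner models $M_{\omega^2\cdot(m-1)}$, $M_{\omega^2\cdot m}$ and $C^{*m}$. This is justified because both statements are theorems of $\zfc$ about models of the form $L^{\mu}$ and their Prikry extensions, because every iterate $M_{\beta}$ satisfies ``$V=L^{\mu'}$'', and because $C^{*m}$ is, provably, the Prikry generic extension $M_{\omega^2\cdot m}[E_m]$ --- the verification of this last fact being exactly the shifted instance of Lemma \ref{lem:Prikry-generic} recorded in the second paragraph. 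Once these routine relativization points are made, the induction goes through with no further work.
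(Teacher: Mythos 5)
Your proposal is correct and follows essentially the same route as the paper, which computes $C^{*2}$ from $C^{*1}$ via Lemma \ref{lem:Prikry-generic} and Proposition \ref{prop:C* in Prikry ext} and then says ``we can continue inductively''; you have simply written out that induction, including the relativization of the two cited results to the iterates $M_{\omega^{2}\cdot m}$ and the ordinal arithmetic, which is exactly the content the paper leaves implicit.
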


This concludes the proof of theorem \ref{thm:decreasing}. To analyze
$\left(C^{*\omega}\right)^{L^{\mu}}$, we will use the following theorem,
due to Bukovsk\'{y} \cite{bukovsky1973changing,bukovsky1977iterated}
and Dehornoy \cite{dehornoy1978iterated}:
\begin{fact}
\label{thm:Buk-Deho}If $\kappa$ is measurable, $M_{\alpha}$ is
the $\alpha$-th iterate of $V$as by a normal ultrafilter on $\kappa$
and $\kappa_{\alpha}$ the $\alpha$-th image of $\kappa$, then for
any limit ordinal $\lambda$ exactly one of the following holds:
\begin{enumerate}
\item If $\exists\alpha<\lambda$ s.t. $M_{\alpha}\vDash\cof{\lambda}>\omega$
then $\bigcap_{\alpha<\lambda}M_{\alpha}=M_{\lambda}$
\item If $\lambda=\alpha+\omega$ for some $\alpha$, then $\left\langle \kappa_{\alpha+n}\mid n<\omega\right\rangle $
is Prikry generic over $M_{\lambda}$ and $\bigcap_{\alpha<\lambda}M_{\alpha}=M_{\lambda}\left[\left\langle \kappa_{\alpha+n}\mid n<\omega\right\rangle \right]$
\item Otherwise, $\bigcap_{\alpha<\lambda}M_{\alpha}$ is a quasi-generic
extension of $M_{\lambda}$, hence satisfies $\mathrm{ZF}$, but doesn't
satisfy $\mathrm{AC}$.
\end{enumerate}
\end{fact}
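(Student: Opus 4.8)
The statement is the Bukovsk\'{y}--Dehornoy analysis of the intersection of a linear iteration, so my plan is to reconstruct their argument from the standard theory of iterated ultrapowers. First I would record the structural preliminaries: the iterates form a decreasing chain $M_{\alpha}\supseteq M_{\beta}$ for $\alpha\leq\beta$, each $M_{\lambda}$ at a limit stage is the direct limit of $\left\langle M_{\alpha},j_{\alpha\beta}\right\rangle _{\alpha<\lambda}$, and the critical sequence $\left\langle \kappa_{\alpha}\right\rangle $ is increasing and continuous, so that $\kappa_{\lambda}=\sup_{\alpha<\lambda}\kappa_{\alpha}$, whence $\cof{\kappa_{\lambda}}=\cof{\lambda}$; moreover $\kappa_{\lambda}$ is measurable, hence regular, in $M_{\lambda}$. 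Decreasingness gives the easy inclusion $M_{\lambda}\subseteq N:=\bigcap_{\alpha<\lambda}M_{\alpha}$, and the whole content is to measure $N$ over $M_{\lambda}$. The pivotal object is the critical sequence itself: since each $M_{\alpha}$ can internally run its own iteration, the tail $\left\langle \kappa_{\beta}\mid\alpha\leq\beta<\lambda\right\rangle $ is definable in, hence belongs to, $M_{\alpha}$, whereas any sequence cofinal in $\kappa_{\lambda}$ is barred from $M_{\lambda}$ by regularity. So the analysis reduces to asking, for each candidate cofinal sequence assembled from the $\kappa_{\beta}$, whether it survives into every $M_{\alpha}$, equivalently into $N$.

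For case (1), where $M_{\alpha}\vDash\cof{\lambda}>\omega$ for some --- hence, by monotonicity of cofinality along the shrinking chain, all large --- $\alpha<\lambda$, I would prove $N=M_{\lambda}$. The tool is a threading/continuity argument: any bounded subset of $\kappa_{\lambda}$ appearing in some $M_{\beta}$ already lies in $M_{\lambda}$, because $j_{\beta\lambda}$ has critical point $\kappa_{\beta}$ and so fixes such a set pointwise via $a=j_{\beta\lambda}(a)\cap\kappa_{\beta}$. Given $x\in N$, coded as a subset of an ordinal, a genuinely new element of $N\smin M_{\lambda}$ would amount to a new cofinal thread through the directed system, which uncountable cofinality of $\kappa_{\lambda}$ (as seen in the tail models) precludes; decomposing $x$ into bounded pieces and reassembling them inside $M_{\lambda}$ then yields $x\in M_{\lambda}$.

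For case (2), $\lambda=\alpha^{*}+\omega$, I would isolate the sequence $c=\left\langle \kappa_{\alpha^{*}+n}\mid n<\omega\right\rangle $ and first check $c\in N$: it is definable in $M_{\alpha^{*}}$, and for each $n$ its finite initial part is a finite set of ordinals below $\crit{j_{\alpha^{*}+n,\lambda}}$ while the remainder is internally definable in $M_{\alpha^{*}+n}$, so $c\in M_{\alpha}$ for every $\alpha<\lambda$. Then I would apply Mathias's criterion (already quoted above) to see that $c$ is Prikry generic over $M_{\lambda}$ for the measure $\mu_{\lambda}$ on $\kappa_{\lambda}$: given $X\in\mu_{\lambda}$, writing $X=j_{\beta\lambda}(\bar{X})$ and using normality forces all but finitely many $\kappa_{\alpha^{*}+n}$ into $X$. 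Finally I would establish the exact equality $N=M_{\lambda}\left[c\right]$; the inclusion $\supseteq$ is immediate, and for $\subseteq$ one shows every $x\in N$ is computable in $M_{\lambda}\left[c\right]$, using that a Prikry extension adds no new bounded subsets and that the models $M_{\alpha^{*}+n}$ converge to $M_{\lambda}\left[c\right]$.

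For case (3) --- $\lambda$ a limit of limit ordinals with $M_{\alpha}\vDash\cof{\lambda}=\omega$ for all $\alpha<\lambda$ --- I would show that although $\kappa_{\lambda}$ is singularized from outside, no single $\omega$-sequence cofinal in $\kappa_{\lambda}$ assembled from critical points lies in $N$: any such natural sequence passes through stages $\kappa_{\lambda_{i}}$ with $\lambda_{i}<\lambda$ limit, and at stage $M_{\lambda_{i}}$ its initial segment, being cofinal in the regular cardinal $\kappa_{\lambda_{i}}$, is already absent. Thus $N$ contains the critical points in fragments but never assembled into a cofinal thread, which is exactly Dehornoy's quasi-generic situation; I would then verify that $N\vDash\mathrm{ZF}$, as the appropriate intersection of a decreasing definable chain of models of $\mathrm{ZFC}$, while $\mathrm{AC}$ fails precisely because the fragments cannot be well-ordered into the missing cofinal sequence. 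The trichotomy is exhaustive and exclusive since $\lambda=\alpha^{*}+\omega$ forces $\cof{\lambda}=\omega$ in every model, so (1) and (2) are incompatible and (3) is their joint negation. The steps I expect to be the main obstacle are the \emph{exact} equality in case (2) --- showing $N$ is no larger than the single Prikry extension rather than merely containing it --- and the $\mathrm{ZF}$-but-not-$\mathrm{AC}$ verification in case (3), where the quasi-generic structure and the exact locus of the failure of choice must be pinned down.
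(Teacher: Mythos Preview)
The paper does not prove this statement at all: it is stated as a \emph{Fact}, explicitly attributed to Bukovsk\'{y} and Dehornoy with citations, and is used as a black box to deduce the corollary about $C^{*\omega}$. There is therefore no ``paper's own proof'' to compare your proposal against.

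That said, your outline is broadly faithful to how the Bukovsk\'{y}--Dehornoy analysis actually proceeds, and the ingredients you isolate (bounded subsets of $\kappa_{\lambda}$ are fixed by tail embeddings, Mathias's criterion for the Prikry genericity of the critical sequence in the $\alpha+\omega$ case, and the failure of choice in the limit-of-limits case) are the right ones. You are also correct to flag the two genuinely nontrivial points: the reverse inclusion $N\subseteq M_{\lambda}[c]$ in case~(2), and the verification that $N\vDash\mathrm{ZF}$ together with the precise failure of $\mathrm{AC}$ in case~(3). Your sketch for case~(1) is a bit loose --- the phrase ``a new cofinal thread through the directed system'' is suggestive but not a proof, and one really needs to argue that every element of $N$ is represented in the direct limit, using uncountable cofinality to find a single $M_{\alpha}$ catching enough data --- and in case~(3) ``the fragments cannot be well-ordered into the missing cofinal sequence'' is an intuition rather than an argument; Dehornoy's actual proof that $\mathrm{AC}$ fails is more delicate than this. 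But as a plan of attack it is sound.
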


\begin{cor}
$C^{*\omega}=\bigcap_{\alpha<\omega^{3}}M_{\alpha}$ and it satisfies
$\mathrm{ZF}$ but not $\mathrm{AC}$.
\end{cor}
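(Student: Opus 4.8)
The plan is to identify $C^{*\omega}=\bigcap_{n<\omega}C^{*n}$ with $\bigcap_{\alpha<\omega^{3}}M_{\alpha}$ and then read off the conclusion from Fact~\ref{thm:Buk-Deho} applied with $\lambda=\omega^{3}$.

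First I would establish, for every $m$ with $1\le m<\omega$, the two-sided sandwich
\[
M_{\omega^{2}\cdot m}\con C^{*m}\con M_{\omega^{2}\cdot(m-1)}.
\]
The left inclusion is immediate from the theorem just proved, since $C^{*m}=M_{\omega^{2}\cdot m}\left[E_{m}\right]$. For the right inclusion I would invoke the inductive computation preceding that theorem: $C^{*m}=\left(C^{*}\right)^{C^{*(m-1)}}$, and -- since $E_{m-1}$ is Prikry-generic over $M_{\omega^{2}\cdot(m-1)}$ (this is lemma~\ref{lem:Prikry-generic} read inside $M_{\omega^{2}\cdot(m-1)}$, which is again of the form $L^{\mu'}$ for a measure on $\kappa_{\omega^{2}\cdot(m-1)}$) -- proposition~\ref{prop:C* in Prikry ext} gives $C^{*m}=\left(C^{*}\right)^{M_{\omega^{2}\cdot(m-1)}[E_{m-1}]}=\left(C^{*}\right)^{M_{\omega^{2}\cdot(m-1)}}$, which is an inner model of $M_{\omega^{2}\cdot(m-1)}$; for $m=1$ this degenerates to $C^{*1}=\left(C^{*}\right)^{V}\con V=M_{0}$.

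Then I would collapse the sandwich. Intersecting over $1\le m<\omega$, and using that $M_{0}=V\supseteq M_{\omega^{2}}$ so that adjoining the index $0$ changes neither side, the left-hand intersection and (after reindexing $m\mapsto m-1$) the right-hand intersection both equal $\bigcap_{m<\omega}M_{\omega^{2}\cdot m}$; hence $\bigcap_{1\le m<\omega}C^{*m}=\bigcap_{m<\omega}M_{\omega^{2}\cdot m}$. Since every $C^{*m}$ is contained in $C^{*0}=V$, we also have $C^{*\omega}=\bigcap_{n<\omega}C^{*n}=\bigcap_{1\le m<\omega}C^{*m}$. Finally, the iterates $\left\langle M_{\alpha}\right\rangle $ are $\con$-decreasing and $\left\{ \omega^{2}\cdot m\mid m<\omega\right\} $ is cofinal in $\omega^{3}$, so $\bigcap_{m<\omega}M_{\omega^{2}\cdot m}=\bigcap_{\alpha<\omega^{3}}M_{\alpha}$. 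Combining, $C^{*\omega}=\bigcap_{\alpha<\omega^{3}}M_{\alpha}$.

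To finish I would apply Fact~\ref{thm:Buk-Deho} with $\lambda=\omega^{3}$. Clause~(1) is ruled out: $\omega^{3}$ is a countable ordinal and its cofinal $\omega$-sequence $m\mapsto\omega^{2}\cdot m$ is arithmetically definable, hence lies in $L$ and so in every $M_{\alpha}$, so no $M_{\alpha}$ can satisfy $\cof{\omega^{3}}>\omega$. Clause~(2) is ruled out: $\omega^{3}$ is a limit of limit ordinals (the $\omega^{2}\cdot m$), hence is not of the form $\alpha+\omega$. Therefore clause~(3) holds, so $\bigcap_{\alpha<\omega^{3}}M_{\alpha}=C^{*\omega}$ is a quasi-generic extension of $M_{\omega^{3}}$ and thus satisfies $\mathrm{ZF}$ but not $\mathrm{AC}$, which is exactly the corollary. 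The one step that is more than bookkeeping is the upper half of the sandwich: one must notice that $C^{*m}$, although presented as the proper extension $M_{\omega^{2}\cdot m}\left[E_{m}\right]$, is nevertheless an \emph{inner} model of the earlier iterate $M_{\omega^{2}\cdot(m-1)}$ -- and this is precisely what proposition~\ref{prop:C* in Prikry ext}, read inside $M_{\omega^{2}\cdot(m-1)}$, delivers.
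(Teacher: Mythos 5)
Your proof is correct and follows essentially the same route as the paper: squeeze $C^{*m}=M_{\omega^{2}\cdot m}\left[E_{m}\right]$ between $M_{\omega^{2}\cdot m}$ and $M_{\omega^{2}\cdot(m-1)}$, intersect over $m$, identify the result with $\bigcap_{\alpha<\omega^{3}}M_{\alpha}$, and invoke clause (3) of Fact \ref{thm:Buk-Deho} at $\lambda=\omega^{3}$. The only cosmetic difference is that the paper justifies the upper inclusion by noting $E_{m}\in M_{\omega^{2}\cdot(m-1)}$ directly, whereas you route it through Proposition \ref{prop:C* in Prikry ext} (that $C^{*m}$ is an inner model of $M_{\omega^{2}\cdot(m-1)}$); both are valid.
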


\begin{proof}
By definition and our previous calculation, 
\[
C^{*\omega}=\bigcap_{m<\omega}C^{*m}=\bigcap_{1\leq m<\omega}M_{\omega^{2}\cdot m}\left[E_{m}\right]
\]
and for each $m\geq1$, $E_{m}\in M_{\omega^{2}\cdot(m-1)}$ so 
\[
\bigcap_{1\leq m<\omega}M_{\omega^{2}\cdot m}\left[E_{m}\right]=\bigcap_{m<\omega}M_{\omega^{2}\cdot m}=\bigcap_{\alpha<\omega^{3}}M_{\alpha}.
\]
 Since $\omega^{3}$ is of cofinality $\omega$ but not of the form
$\alpha+\omega$, the conclusion follows from (3) of fact \ref{thm:Buk-Deho}.
\end{proof}

\section{Conclusion and open questions}

We summarize what is now known in terms of equiconsistency:
\begin{enumerate}
\item $\zfc$ is equiconsistent with $V=C^{*}\ne L\,+$ $2^{\aleph_{0}}=\aleph_{1}$.
\item Existence of an inaccessible cardinal is equiconsistent with $V=C^{*}+$
$2^{\aleph_{0}}=\aleph_{2}$.
\item Existence of a measurable cardinal is equiconsistent with $\forall n<\omega(C^{*n}\supsetneq C^{*(n+1)})$
and $C^{*\omega}\vDash\mathrm{ZF+\neg AC}$.
\end{enumerate}
Compared to the results regarding $\mathrm{HOD}$, the following questions
remain open:
\begin{question}
\begin{enumerate}
\item Is it possible, under any large cardinal hypothesis, that $\forall n<\omega$
$C^{*n}\supsetneq C^{*(n+1)}$ and $C^{*\omega}\vDash\zfc$? More
generally, for which ordinals $\alpha$ can we get a decreasing $C^{*}$
sequence of length $\alpha$?
\item Is it possible, under any large cardinal hypothesis, that $\forall n<\omega$
$C^{*n}\supsetneq C^{*(n+1)}$ and $C^{*\omega}\nvDash\mathrm{ZF}$?
\end{enumerate}
\end{question}

A natural first attempt towards answering the first question would
be to try and work in a model with more measurable cardinals. However,
it seems that it would require at least \emph{measurably many} measurables:
in a forthcoming paper \cite{yaar2021short-seq}, we generalize \cite[theroem 5.16]{IMEL}
and our proposition \ref{prop:C* in Prikry ext} and show the following:
\begin{thm}
Assume $V=L\left[\mathcal{U}\right]$ where $\mathcal{U}=\left\langle U_{\gamma}\mid\gamma<\chi\right\rangle $
is a sequence of measures on the increasing measurables $\left\langle \kappa^{\gamma}\mid\gamma<\chi\right\rangle $
where $\chi<\kappa^{0}$. Iterate $V$ according to $\mathcal{U}$
where each measurable is iterated $\omega^{2}$ many times, to obtain
$\left\langle M_{\alpha}^{\gamma}\mid\gamma<\chi,\alpha\leq\omega^{2}\right\rangle $,
with iteration points $\left\langle \kappa_{\alpha}^{\gamma}\mid\gamma<\chi,\alpha\leq\omega^{2}\right\rangle $,
and set $M^{\chi}$ as the directed limit of this iteration. Let $G$
be generic over $V$ for the forcing adding a Prikry sequence to every
$\kappa^{\gamma}$. Set for every $\gamma<\chi$ $E^{\gamma}=\left\langle \kappa_{\omega\cdot n}^{\gamma}\mid1\leq n<\omega\right\rangle $
and $\hat{E}^{\gamma}=\left\langle \kappa_{\omega\cdot n}^{\gamma}\mid0\leq n<\omega\right\rangle $
then 
\begin{align*}
\left(C^{*}\right)^{V} & =M^{\chi}\left[\left\langle E^{\gamma}\mid\gamma<\chi\right\rangle \right]\\
\left(C^{*}\right)^{V\left[G\right]} & =M^{\chi}\left[\left\langle \hat{E}^{\gamma}\mid\gamma<\chi\right\rangle \right].
\end{align*}
\end{thm}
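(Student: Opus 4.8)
The plan is to follow the proofs of \cite[theorem 5.16]{IMEL} and Proposition \ref{prop:C* in Prikry ext} block by block, the only genuinely new ingredient being the bookkeeping needed to run the $\omega^{2}$-fold iterations of the $\chi$ measurables in parallel. First I would record the shape of the iteration $j\colon V\to M^{\chi}$. Since $\chi<\kappa^{0}$ and each $\kappa^{\gamma}$ is inaccessible above $\sup_{\gamma'<\gamma}\kappa_{\omega^{2}}^{\gamma'}$, iterating $\kappa^{\gamma}$ leaves $\kappa^{\gamma''}$ fixed for $\gamma''>\gamma$, so the $\chi$ blocks do not interfere; hence the critical points of $j$ are exactly the $\kappa_{\alpha}^{\gamma}$ ($\gamma<\chi$, $\alpha<\omega^{2}$), $M^{\chi}\subseteq V$, and $M^{\chi}$ is again of the form $L[\mathcal{U}']$ for a coherent sequence $\mathcal{U}'$ of measures on $\langle\kappa_{\omega^{2}}^{\gamma}\mid\gamma<\chi\rangle$. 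I would then establish the genericity facts generalizing Lemma \ref{lem:Prikry-generic}: by a product form of Mathias's criterion, $\langle E^{\gamma}\mid\gamma<\chi\rangle$ and $\langle\hat{E}^{\gamma}\mid\gamma<\chi\rangle$ are generic over $M^{\chi}$ for the product of the Prikry forcings attached to the $\kappa_{\omega^{2}}^{\gamma}$. Note that the sequence $\langle\kappa^{\gamma}\mid\gamma<\chi\rangle$ need not belong to $M^{\chi}$, and this is exactly why replacing $E^{\gamma}$ by $\hat{E}^{\gamma}$ genuinely enlarges the model.

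The heart of the matter is the cofinality computation. Recall first that, exactly as in the one-measurable case, the Prikry forcing $G$ changes cofinalities only by making each $\kappa^{\gamma}$ have cofinality $\omega$, so $V[G]\vDash\cof{\alpha}=\omega$ iff $V\vDash\cof{\alpha}\in\{\omega\}\cup\{\kappa^{\gamma}\mid\gamma<\chi\}$. Next I would generalize Lemma \ref{lem:k-cof preserves} to: for every $\gamma<\chi$ and every ordinal $\alpha$, $V\vDash\cof{\alpha}=\kappa^{\gamma}$ iff $M^{\chi}\vDash\cof{\alpha}=\kappa^{\gamma}$ --- the proof is that of Lemma \ref{lem:k-cof preserves}, with the pigeonhole ``some $\bar\beta<\beta$ fits $\kappa$ many $\alpha_{\eta}$s'' applied inside the relevant block and using $\chi<\kappa^{0}$ --- and, since Prikry forcing on $\kappa_{\omega^{2}}^{\gamma}>\kappa^{\gamma}$ adds no bounded subsets, the same equivalence persists with $M^{\chi}$ replaced by $M^{\chi}[\langle E^{\gamma}\rangle]$ or $M^{\chi}[\langle\hat{E}^{\gamma}\rangle]$. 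Combining this with the blockwise analysis from \cite[theorem 5.16]{IMEL} of how an $\omega^{2}$-fold iterate moves cofinality $\omega$ --- using that each $\kappa_{\omega\cdot n}^{\gamma}$ ($1\le n<\omega$) is a regular cardinal of $M^{\chi}$, and that the corresponding Prikry generic gives every ordinal of $M^{\chi}$-cofinality $\kappa_{\omega^{2}}^{\gamma}$ cofinality $\omega$ --- yields, for every ordinal $\alpha$,
\begin{align*}
V\vDash\cof{\alpha}=\omega &\iff M^{\chi}[\langle E^{\gamma}\rangle]\vDash\cof{\alpha}\in\{\omega\}\cup{\textstyle\bigcup_{\gamma<\chi}}\bigl(\rng(E^{\gamma})\cup\{\sup E^{\gamma}\}\bigr),\\
V[G]\vDash\cof{\alpha}=\omega &\iff M^{\chi}[\langle\hat{E}^{\gamma}\rangle]\vDash\cof{\alpha}\in\{\omega\}\cup{\textstyle\bigcup_{\gamma<\chi}}\bigl(\rng(\hat{E}^{\gamma})\cup\{\sup\hat{E}^{\gamma}\}\bigr),
\end{align*}
where for the second line one uses in addition that $\kappa^{\gamma}=\min\hat{E}^{\gamma}$, so that the case $\cof{\alpha}=\kappa^{\gamma}$ is caught by $\cof{\alpha}\in\rng(\hat{E}^{\gamma})$.

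From these two biconditionals both inclusions follow as in Proposition \ref{prop:C* in Prikry ext}. For ``$\supseteq$'': inside $(C^{*})^{V}$ (resp. $(C^{*})^{V[G]}$) the sequence $\langle E^{\gamma}\rangle$ (resp. $\langle\hat{E}^{\gamma}\rangle$) is recognizable via the recursive characterization used there --- its members are, block by block, the ordinals of cofinality $\omega$ in $V$ (resp. $V[G]$), lying in the appropriate intervals, that are regular in the core model contained in $C^{*}$, taking the same care as in Proposition \ref{prop:C* in Prikry ext} to exclude the $\kappa^{\gamma}$ in the first case and to include them in the second --- and from these codes one recovers the full critical sequences, hence the iteration, hence $M^{\chi}$. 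For ``$\subseteq$'': the biconditionals say precisely that inside $M^{\chi}[\langle E^{\gamma}\rangle]$ (resp. $M^{\chi}[\langle\hat{E}^{\gamma}\rangle]$) one can decide, for each $\alpha$, whether $V\vDash\cof{\alpha}=\omega$ (resp. $V[G]\vDash\cof{\alpha}=\omega$), so the entire hierarchy defining $C^{*}$ can be carried out there. The step I expect to be the real obstacle is the cofinality computation when $\chi$ is a limit: at the final directed limit an ordinal's cofinality can in principle be moved by cofinally many blocks at once, and checking that the single-block analyses still compose correctly --- which is where the hypothesis $\chi<\kappa^{0}$ has to be used with care --- is the delicate point.
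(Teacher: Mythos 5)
A preliminary remark: this theorem is not proved in the present paper --- it is stated as a quotation from the forthcoming \cite{yaar2021short-seq}, with only the indication that it ``generalizes \cite[theorem 5.16]{IMEL} and our proposition \ref{prop:C* in Prikry ext}''. So there is no in-paper proof to measure your attempt against, and what follows is an assessment of the proposal on its own terms. Your overall strategy is exactly the announced one: the two displayed biconditionals are the correct generalizations of the equivalence ``$V\vDash\cof{\alpha}=\omega$ iff $M_{\omega^{2}}\left[E\right]\vDash\cof{\alpha}\in\left\{\omega\right\}\cup E\cup\left\{\sup E\right\}$'' used in proposition \ref{prop:C* in Prikry ext}; the treatment of $\cof{\alpha}=\kappa^{\gamma}$ via the analogue of lemma \ref{lem:k-cof preserves} together with $\kappa^{\gamma}=\min\hat{E}^{\gamma}$ is the right mechanism for explaining the difference between $E^{\gamma}$ and $\hat{E}^{\gamma}$; and the recovery of the sequences inside $C^{*}$ as the cofinality-$\omega$ ordinals of the appropriate intervals that are regular in the core model is the right move for the reverse inclusion.

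The gap is that the one step carrying essentially all the content is asserted rather than proved, and you say so yourself. The single-measure characterization from \cite[theorem 5.16]{IMEL} does not transfer ``blockwise'' for free: what must be shown is that the regular cardinals of $M^{\chi}$ which are singular of cofinality $\omega$ in $V$ (equivalently in $V\left[G\right]$, after adding the $\kappa^{\gamma}$ themselves) are \emph{exactly} the $\kappa_{\omega\cdot n}^{\gamma}$ for $1\leq n<\omega$ together with the $\kappa_{\omega^{2}}^{\gamma}$, and for limit $\chi$ this requires ruling out new cofinality-$\omega$ points arising ``diagonally'' across infinitely many blocks --- for instance ordinals whose $M^{\chi}$-cofinality is a limit of critical points taken from cofinally many blocks when $\cof{\chi}=\omega$. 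You identify this as ``the delicate point'' but do not close it, and the covering-type input needed to run the \cite{IMEL} analysis over $L\left[\mathcal{U}\right]$ rather than $L^{\mu}$ (and over $M^{\chi}$, which is now a genuinely transfinite directed limit) is nowhere supplied. As a roadmap the proposal is sound and matches the intended route; as a proof it is missing precisely the part that the forthcoming paper exists to supply.
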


So, if $V=L\left[\mathcal{U}\right]$ as above, $C^{*}$ is of the
form $L\left[\mathcal{U}_{1}\right]\left[G_{1}\right]$ for some sequence
of measures and a sequence of Prikry sequences on it's measures, and
so $C^{*2}$ is again of that form, where we iterated the measures
in $\mathcal{U}$ $\omega^{2}\cdot2$ many times and add Prikry sequences.
So again, as we've done here, we'll get that $C^{*\omega}$ is the
intersection of the models $M_{\omega^{2}\cdot n}^{\chi}$ where we
iterated each measure $\omega^{2}\cdot n$ times . This is due to
the facts that changing the order of iteration between the measures
doesn't change the final result, and that the Prikry sequences ``fall
out'' during the intersection. Now, we don't have a complete analysis
of intersections of iterations by more than one measure, but Dehornoy
proves the following more general fact:
\begin{fact}[{\cite[section 5.3 proposition 3]{dehornoy1978iterated}}]
 For every $\alpha$ let $N_{\alpha}$ be the $\alpha$th iteration
of $V$ by some measure. Assume $\lambda$ is such that for every
$\alpha<\lambda$, $N_{\alpha}\vDash\mathrm{cf}(\lambda)=\omega$
but there is no $\rho$ such that $\lambda=\rho+\omega$. Then if
$M$ is a transitive inner model of $\zfc$ containing $\bigcap_{\alpha<\lambda}N_{\alpha}$,
then there is some $\alpha<\lambda$ such that $N_{\alpha}\con M$.
\end{fact}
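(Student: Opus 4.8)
The plan is to argue by contradiction after a convenient reformulation. First I would record the standard facts about the iteration: the iterates are decreasing, so with $P:=\bigcap_{\alpha<\lambda}N_{\alpha}$ we have $N_{\lambda}\con P\con N_{\alpha}$ for all $\alpha<\lambda$; the critical points $\left\langle \kappa_{\gamma}\mid\gamma<\lambda\right\rangle $ are increasing and continuous with $\kappa_{\lambda}=\sup_{\gamma<\lambda}\kappa_{\gamma}$ measurable in $N_{\lambda}$; and $\beta\leq\gamma$ implies $N_{\gamma}\con N_{\beta}$. The last point shows that $\left\{ \alpha\leq\lambda\mid N_{\alpha}\con M\right\} $ is a final segment of $[0,\lambda]$, and it contains $\lambda$ since $N_{\lambda}\con P\con M$; hence the conclusion is equivalent to the assertion that this final segment reaches below $\lambda$. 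So I would suppose, toward a contradiction, that $N_{\alpha}\nsubseteq M$ (equivalently the measure $U_{\alpha}$ of $N_{\alpha}$ is not in $M$) for every $\alpha<\lambda$, while $P\con M\vDash\zfc$.

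Second, I would isolate what $M$ gets for free and what it is missing. Using $\cof{\lambda}=\omega$ together with the hypothesis that $\lambda$ is not of the form $\rho+\omega$, fix an increasing cofinal sequence $\left\langle \mu_{n}\mid n<\omega\right\rangle $ of \emph{limit} ordinals below $\lambda$; then the coarse sequence $\left\langle \kappa_{\mu_{n}}\mid n<\omega\right\rangle $ lies in every $N_{\beta}$ (its tail is definable there from $U_{\beta}$ and its head is finite), hence in $P\con M$, so $M$ already sees $\cof{\kappa_{\lambda}}=\omega$. By contrast, for each $n$ the block $\left\langle \kappa_{\gamma}\mid\mu_{n}\leq\gamma<\mu_{n+1}\right\rangle $ lies in $N_{\mu_{n}}$ but in no $N_{\beta}$ with $\beta\geq\mu_{n+1}$ (over such an iterate it is Prikry-generic), so it is absent from $P$; this missing fine structure is exactly the source of the failure of choice in $P$, and it is what $M$ must be forced to recover. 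The role of excluding $\lambda=\rho+\omega$ appears precisely here: in that excluded case the single Prikry block $\left\langle \kappa_{\rho+n}\mid n<\omega\right\rangle $ already belongs to $P$, making $P$ itself a model of $\zfc$ containing no $N_{\alpha}$ with $\alpha<\lambda$, so the theorem would be false; our hypothesis instead guarantees that $\kappa_{\lambda}$ is approached through infinitely many intermediate measurables, so no single block can suffice.

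Third, the heart of the argument is to leverage $M\vDash\zfc$ against the symmetric, quasi-generic nature of $P$ to pull a full tail $I=\left\langle \kappa_{\gamma}\mid\alpha\leq\gamma<\lambda\right\rangle $ of genuine critical points into $M$. I would view $P$ as a symmetric submodel of each $N_{\beta}$: there is a family of maps fixing $N_{\lambda}$ and the coarse data while permuting the fine structure inside the blocks, under which $P$ is invariant but no single block is named. Since $M$ satisfies $\mathrm{AC}$, it possesses a well-ordering of the relevant family of candidate blocks, which cannot be invariant under these maps; analysing how this well-ordering breaks the symmetry---using $U_{\lambda}\in M$ and the Mathias criterion to recognise which cofinal subsets of $\kappa_{\lambda}$ are genuinely Prikry-generic over the intermediate iterates---should pin down, for cofinally many $n$, the actual block $\left\langle \kappa_{\gamma}\mid\mu_{n}\leq\gamma<\mu_{n+1}\right\rangle $, and hence a tail $I$, as an element of $M$. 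Finally, from $N_{\lambda}\con M$ and $I\in M$ one reconstructs inside $M$, by the standard inversion of an iterated ultrapower (the $\kappa_{\gamma}$ serving as a set of indiscernibles that determines the embedding $j_{\alpha,\lambda}$ and thereby its domain $N_{\alpha}$), an iterate $N_{\alpha}\con M$ with $\alpha<\lambda$, contradicting our assumption. The main obstacle is exactly this tail-extraction step: making the symmetry/genericity analysis rigorous for an arbitrary limit $\lambda$ falling under case (3), rather than the clean model case $\lambda=\omega^{2}$, and ensuring that breaking the symmetry yields honest critical points rather than some other cofinal sequence, is where the real work lies; the concluding reconstruction of $N_{\alpha}$ from $N_{\lambda}$ and $I$ is standard, but must also be checked to go through inside $M$.
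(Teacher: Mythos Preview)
The paper does not contain a proof of this statement: it is stated as a \emph{Fact}, with a citation to \cite[section 5.3 proposition 3]{dehornoy1978iterated}, and no argument is supplied. So there is nothing in the paper to compare your proposal against; the proof lives entirely in Dehornoy's article.

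As for the proposal itself: the overall architecture---reduce to showing that a tail of the critical-point sequence $\left\langle \kappa_{\gamma}\mid\gamma<\lambda\right\rangle$ lies in $M$, then reconstruct $N_{\alpha}$ from $N_{\lambda}$ together with that tail---is indeed the shape of Dehornoy's argument. But you explicitly flag your third step as ``the main obstacle'' and leave it as a heuristic about symmetry and well-orderings; this is precisely the substantive content of Dehornoy's proof, and what you have written is a plan rather than an argument. In particular, the claim that the coarse sequence $\left\langle \kappa_{\mu_{n}}\mid n<\omega\right\rangle$ already lies in $P$ needs care (its membership in each $N_{\beta}$ depends on the sequence $\left\langle \mu_{n}\right\rangle$ being available there, which is not automatic for arbitrary $\lambda$), and the ``family of maps fixing $N_{\lambda}$ and permuting the blocks'' must be made precise before a well-ordering in $M$ can be played against it. None of this is wrong in spirit, but the proposal does not yet constitute a proof; for that you would need to consult Dehornoy's actual argument.
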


So, if we take $N_{\alpha}$ to be the iteration of $V$ by the first
measure in $\mathcal{U}$, we get that $C^{*\omega}$ contains $\bigcap_{\alpha<\omega^{3}}N_{\alpha}$,
but doesn't contain any $N_{\alpha}$ for $\alpha<\omega^{3}$, so
$C^{*\omega}$ cannot satisfy $\mathrm{AC}$. Hence a different approach,
or larger cardinals, would be required to answer this question. 

A different line of inquiry stems from the following fact:
\begin{fact}[\cite{IMEL}]
 If there is a proper class of Woodin cardinals then the theory of
$C^{*}$ is unchanged by forcing.
\end{fact}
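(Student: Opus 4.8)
(I sketch one route to this known result.) Since every set-generic extension of $V$ is of the form $V[g]$ for a single poset $\q$ with $g$ generic, the plan is to prove $\th{C^{*}}=\th{\left(C^{*}\right)^{V[g]}}$ for an arbitrary $\q$. First I would compare both sides to the $C^{*}$ of one common, highly generic extension, in the spirit of Woodin's proof that $\th{L(\RR)}$ is generically absolute.

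Concretely, using the proper class of Woodin cardinals I would fix a Woodin $\delta>\left|\q\right|$ --- this is where a proper class, rather than a single instance, is needed: every poset must have a Woodin above it. As $\q$ is small, $\delta$ stays Woodin in $V[g]$, so I can force over $V[g]$ with the countable stationary tower at $\delta$ to obtain an extension $W\supseteq V[g]$ carrying a generic elementary embedding $j_{1}\colon V[g]\to M_{1}$ with $M_{1}\con W$ transitive, $\crit{j_{1}}=\omega_{1}^{V[g]}$, and $M_{1}$ closed under $\omega$-sequences in $W$. Next I would invoke the absorption of the small forcing $\q$ by the stationary tower below $\delta$ (as in Larson's monograph on the stationary tower) to extract, from the same generic, an elementary $j_{0}\colon V\to M_{0}$ with $M_{0}\con W$ transitive, $\crit{j_{0}}=\omega_{1}^{V}$, and $M_{0}$ closed under $\omega$-sequences in $W$.

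The key lemma I would then prove is: \emph{if $M\con W$ is transitive and closed under $\omega$-sequences in $W$, then $\left(C^{*}\right)^{M}=\left(C^{*}\right)^{W}$}. This goes by induction on $\alpha$, showing that the $\alpha$-th level $L'_{\alpha}$ of the $C^{*}$-construction is computed identically in $M$ and in $W$; the only real step is a successor, where one must check that $M$ and $W$ assign the same truth value to each $\LL(\cfq)$-formula over $(L'_{\alpha},\in)$. The sole non-first-order clause is the quantifier $\cfq$, which asserts the existence of a countable cofinal subset of a definable linear order on a set lying in the transitive set $L'_{\alpha}$; any such subset is an $\omega$-sequence of elements of $L'_{\alpha}$, so it is in $M$ by $\omega$-closure exactly when it is in $W$ --- and I would still take care of the ``cofinality exactly $\omega$'' bookkeeping and of the fact that $L'_{\alpha}$ is a set, so each stage is a bona fide satisfaction computation. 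Limit levels are unions. Finally, since ``$x\in C^{*}$'' is uniformly definable, $j_{0}$ and $j_{1}$ restrict to elementary embeddings $j_{0}\mets C^{*}\colon C^{*}\prec\left(C^{*}\right)^{M_{0}}$ and $j_{1}\mets\left(C^{*}\right)^{V[g]}\colon\left(C^{*}\right)^{V[g]}\prec\left(C^{*}\right)^{M_{1}}$, whence by the lemma
\[
\th{C^{*}}=\th{\left(C^{*}\right)^{M_{0}}}=\th{\left(C^{*}\right)^{W}}=\th{\left(C^{*}\right)^{M_{1}}}=\th{\left(C^{*}\right)^{V[g]}}.
\]

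The hard part, I expect, is the construction of $j_{0}$: producing an elementary embedding of the \emph{original} universe $V$ --- not merely of $V[g]$ --- with $\omega$-closed target inside the stationary-tower extension of $V[g]$. This is exactly the absorption of small forcing by the stationary tower below a Woodin, and together with the demand for a Woodin above every poset it is where the large-cardinal hypothesis is consumed; the remaining verifications (the core lemma, the restriction of the embeddings to $C^{*}$) are routine by comparison, modulo the precise conventions attached to $\cfq$.
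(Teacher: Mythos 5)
The paper does not prove this statement --- it is quoted from \cite{IMEL} as an imported fact, so there is no internal proof to compare against. Your sketch correctly reconstructs the argument given in \cite{IMEL}: a Woodin cardinal above the given poset, the countable stationary tower with absorption of the small forcing, and the key lemma that a transitive inner model closed under $\omega$-sequences computes $C^{*}$ correctly (since the only non-first-order ingredient of $\LL(Q_{\omega}^{\mathrm{cf}})$ is witnessed by an $\omega$-sequence) are exactly the ingredients of that proof.
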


So the question whether $C^{*}\vDash V=C^{*}$ cannot be changed under
forcing in the presence of class many Woodin cardinals. If the sequence
of $C^{*\alpha}$ is definable (perhaps up to some ordinal) then this
will also be in the theory of $C^{*}$ (note that on the face of it
even the sequence up to $\omega$ may not be definable). 
\begin{question}
What can be deduced on the sequence of iterated $C^{*}$ from a proper
class of Woodin cardinals?
\end{question}

\bibliographystyle{amsplain}
\bibliography{Bibliography}

\end{document}